\tikzset{ext/.style={circle, draw,inner sep=1pt},int/.style={circle,draw,fill,inner sep=1pt},nil/.style={inner sep=1pt}}
\tikzset{exte/.style={circle, draw,inner sep=3pt},inte/.style={circle,draw,fill,inner sep=3pt}}
\tikzset{diagram/.style={matrix of math nodes, row sep=3em, column sep=2.5em, text height=1.5ex, text depth=0.25ex}}
\tikzset{diagram2/.style={matrix of math nodes, row sep=0.5em, column sep=0.5em, text height=1.5ex, text depth=0.25ex}}
\theoremstyle{plain}
\newtheorem{thm}{Theorem}[section]
\newtheorem{prop}[thm]{Proposition}
\newtheorem{cor}[thm]{Corollary}
\newtheorem{lemma}[thm]{Lemma}
\theoremstyle{definition}
\newtheorem{rem}[thm]{Remark}
\newcommand{\ad}{{\text{ad}}}
\newcommand{\p}{\partial}
\newcommand{\R}{{\mathbb{R}}}
\newcommand{\Z}{{\mathbb{Z}}}
\newcommand{\Q}{{\mathbb{Q}}}
\newcommand{\HGC}{{\mathrm{HGC}}}
\newcommand{\bpm}{\begin{pmatrix}}
\newcommand{\epm}{\end{pmatrix}}
\newcommand{\GC}{\mathrm{GC}}
\newcommand{\MC}{\mathsf{MC}}
\DeclareMathOperator{\rk}{rk}
\DeclareMathOperator{\Emb}{Emb}
\DeclareMathOperator{\Imm}{Imm}
\DeclareMathOperator{\Embbar}{\overline{Emb}}
\DeclareMathOperator{\Diff}{Diff}
\newcommand{\beq}[1]{\begin{equation}\label{#1}}
\newcommand{\eeq}{\end{equation}}
\newcommand{\hofiber}{\mathrm{hofiber}}
\newcommand{\SO}{\mathrm{SO}}
\newcommand{\ar}{\mathrm{ar}}
\begin{document}
\title{On the  homotopy type of the  spaces of spherical  knots in $\R^n$}
%

\author{Victor Turchin}
\address{Department of Mathematics\\
Kansas State University\\
Manhattan, KS 66506, USA}
\email{turchin@ksu.edu}

\author{Thomas Willwacher}
\address{Department of Mathematics \\ ETH Zurich \\
R\"amistrasse 101 \\
8092 Zurich, Switzerland}
\email{thomas.willwacher@math.ethz.ch}


\thanks{V.T. has benefited from a visiting position of the Labex CEMPI (ANR-11-LABX-0007-01) at the Universit\'e de Lille and a visiting  position at the Max Planck Institute for Mathematics in Bonn for the achievement of this work.  V.T.  has also been 
partially supported by the Simons Foundation grant, award ID:~519474. 
T.W. has been partially supported by the NCCR SwissMAP funded by the Swiss National Science Foundation, and the ERC starting grant GRAPHCPX (678156)}


\begin{abstract}
We study the spaces of embeddings $S^m\hookrightarrow \R^n$ and those of long embeddings $\R^m\hookrightarrow\R^n$, i.e. embeddings of a fixed behavior  
outside a compact set.   More precisely we look at 
the homotopy fiber of the inclusion of these spaces to the spaces of immersions. We find a natural fiber sequence relating  these spaces.
We also compare the  $L_\infty$-algebras of diagrams that encode their rational homotopy type, when the codimension
$n-m\geq 3$.

\end{abstract}

\maketitle


\sloppy
\section{Introduction}
In this paper we study a relation between the following two spaces:
\begin{gather}
\Embbar(S^m,\R^n):=\hofiber\left(\Emb(S^m,\R^n)\to\Imm(S^m,\R^n)\right); \label{eq:hofib_sphere} \\
\Embbar_\p(\R^m,\R^n):=\hofiber\left(\Emb_\p(\R^m,\R^n)\to \Imm_\p(\R^m,\R^n)\right), \label{eq:hofib_long}
\end{gather}
where $\Emb(-,-)$ and $\Imm(-,-)$ always refer to spaces of smooth embeddings and immersions, respectively. The homotopy fiber is taken over
the inclusions   $i_1\colon S^m\subset \R^{m+1}\times 0^{n-m-1}\subset \R^n$ and $i_2\colon\R^m=\R^m\times 0^{n-m}\subset\R^n$. The subscript
$\partial$ means that the embeddings and immersions must coincide with the inclusion $i_2\colon\R^m\subset\R^n$ outside a compact subset of $\R^m$. The spaces~\eqref{eq:hofib_sphere} and~\eqref{eq:hofib_long}  are called
{\it spaces of embeddings modulo immersions}.

The spaces $\Embbar_\p(\R^m,\R^n)$ have been objects of active study~\cite{Turchin2,Turchin3,WBdB2,Budney1,Budney2,Budney3,BudneyCohen,
DucT,FTW,Sakai,SakaiWatanabe}. They were shown to be $E_{m+1}$-algebras~\cite{Budney1,Budney3,Sakai} equivalent to $(m+1)$-loop spaces~\cite{WBdB2,DucT,Sakai} when $n-m\geq 3$.

To compare their homotopy type to that of $\Embbar(S^m,\R^n)$ let us begin with a few observations.
Given an embedding $\psi\in\Embbar(S^m,\R^n)$, we can define an inclusion 
\beq{eq:Psi}
\Embbar_\p(\R^m,\R^n)\hookrightarrow \Embbar(S^m,\R^n).
\eeq
The idea of this map is to perturb $\psi$ near some point $p\in S^m$. By  standard fibration and transversality arguments it is easy to show that for $n-m\geq 3$, $\pi_*\Emb_\p(\R^m,\R^n)\simeq\pi_*\Emb(S^m,\R^n)$ and $\pi_*\Imm_\p(\R^m,\R^n)\simeq\pi_*\Imm(S^m,\R^n)$,
for $*\leq 1$. This implies that the inclusion~\eqref{eq:Psi} induces a bijection of the sets of connected components $\pi_0\Embbar_\p(\R^m,\R^n)\simeq\pi_0\Embbar(S^m,\R^n)$. It is also not hard to show that the inclusion~\eqref{eq:Psi} can be enhanced to an $\Embbar_\p(\R^m,\R^n)$-action on
$\Embbar(S^m,\R^n)$.\footnote{To carefully define this action the spaces $\Embbar_\p(\R^m,\R^n)$ and $\Embbar(S^m,\R^n)$ need to be replaced by the homotopy equivalent spaces  $\Embbar{}^{fr}_\p(\R^m,\R^n)$, $\Embbar{}^{fr}(S^m,\R^n)$ of framed embeddings modulo framed immersions.}  Our main result now states that the homotopy quotient of $\Embbar(S^m,\R^n)$ by the action of $\Embbar_\p(\R^m,\R^n)$ is the sphere~$S^{n{-}m{-}1}$, or equivalently that $\Embbar(S^m,\R^n)$ is homotopy equivalent to a principal $\Embbar_\p(\R^m,\R^n)$-bundle on the sphere.


\begin{thm}\label{thm:emb_long}
    For $n-m\geq 3$, one has an equivalence
    \beq{equ:emb_long_intro}
    \Embbar(S^m,\R^n)\simeq\hofiber\left(S^{n-m-1}\xrightarrow{} 
    B\Embbar_\p(\R^m,\R^n)\right).
    \eeq
    This in particular  implies that all connected components of $\Embbar(S^m,\R^n)$ have the same homotopy type and that $\pi_0\Embbar(S^m,\R^n)=\pi_0\Embbar_\p(\R^m,\R^n)$.
\end{thm}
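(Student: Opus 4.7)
The plan is to produce a fibration sequence
\[
\Embbar_\p(\R^m,\R^n)\longrightarrow \Embbar(S^m,\R^n)\longrightarrow S^{n-m-1},
\]
since \eqref{equ:emb_long_intro} is then immediate: a fibration $F\to E\to B$ with group-like fiber $F$ is equivalent to $\hofiber(B\to BF)\to B$, and here $B=S^{n-m-1}$ and $F=\Embbar_\p(\R^m,\R^n)$.

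To construct this, I would fix a basepoint $p_0\in S^m$ and assemble a commutative diagram of homotopy fiber sequences
\[
\begin{array}{ccc}
\Emb_\p(\R^m,\R^n) & \longrightarrow & \Imm_\p(\R^m,\R^n) \\
\downarrow & & \downarrow \\
\Emb(S^m,\R^n) & \longrightarrow & \Imm(S^m,\R^n) \\
\downarrow & & \downarrow \\
B_{\Emb} & \longrightarrow & B_{\Imm}
\end{array}
\]
whose bottom row records the ``germ at $p_0$'' of a spherical map, with vertical columns the fibrations that identify long maps (top row) with those of standard germ at $p_0$ under stereographic projection $S^m\setminus\{p_0\}\cong\R^m$. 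On the immersion side, Smale--Hirsch identifies $B_{\Imm}\simeq V_m(\R^n)$, the Stiefel manifold of tangent $m$-frames at $p_0$. On the embedding side, the germ carries additional data reflecting a canonical outward-normal direction in the $(n-m)$-dimensional normal space, so that the map $B_{\Emb}\to B_{\Imm}$ is an $S^{n-m-1}$-bundle (the natural guess being $B_{\Emb}\simeq V_{m+1}(\R^n)$ with the standard forgetful map to $V_m(\R^n)$). Taking horizontal homotopy fibers of the bottom two rows then produces the desired fibration sequence, using that the horizontal homotopy fibers of the top two rows give $\Embbar_\p$ and $\Embbar$, and that the horizontal homotopy fiber of the last row is $S^{n-m-1}$.

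The remaining assertions of the theorem follow from the associated long exact sequence in homotopy. As $n-m-1\geq 2$, the base $S^{n-m-1}$ is simply connected, so $\pi_0\Embbar_\p(\R^m,\R^n)\to\pi_0\Embbar(S^m,\R^n)$ is a bijection (compatible with the one already established in the introduction via transversality), and all components of $\Embbar(S^m,\R^n)$ are homotopy equivalent because the base is simply connected and the fiber is group-like with homotopy-equivalent components.

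The main obstacle is the construction of $B_{\Emb}$ and the verification that its comparison with $B_{\Imm}$ has fiber $S^{n-m-1}$. The immersion side follows cleanly from the $h$-principle, but the embedding side involves genuinely global input: the extra $S^{n-m-1}$-direction is not encoded in the local jet of $\psi$ at $p_0$ alone and must be extracted by a global mechanism, perhaps via a configuration-space or scanning-type argument, or by exploiting the compactness of $S^m$ to pin down a canonical outward normal near $p_0$.
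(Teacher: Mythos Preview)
Your outline correctly targets the fiber sequence
\[
\Embbar_\p(\R^m,\R^n)\longrightarrow \Embbar(S^m,\R^n)\longrightarrow S^{n-m-1},
\]
and the deduction of the remaining assertions from it is fine. But the proof has a genuine gap at exactly the point you flag at the end: the space $B_{\Emb}$ is never constructed, and the naive candidate---germs of embeddings at $p_0$---does not work. A germ of an embedding at a point is the same as a germ of an immersion, so the ``germ at $p_0$'' map $\Emb(S^m,\R^n)\to B_{\Emb}$ lands in the same space $V_m(\R^n)$ (up to the contractible $\R^n$ factor) as the immersion version, and the comparison $B_{\Emb}\to B_{\Imm}$ would be an equivalence, not an $S^{n-m-1}$-bundle. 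Your suggestion $B_{\Emb}\simeq V_{m+1}(\R^n)$ would require a canonical normal direction at $p_0$, but in codimension $\geq 2$ the complement of $\psi(S^m)$ is connected and there is no ``outward'' side to single one out locally. So the construction of the extra $S^{n-m-1}$ is not a detail to be filled in later; it is the content of the theorem.

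The paper resolves this not by enlarging the base but by analyzing the fiber. One restricts to embeddings $\Emb_*(S^m,\R^n)$ with prescribed germ at $p_0$, identifies this with $\Emb_\p(\R^m,\R^n\setminus\{0\})$ via stereographic projection, and then shows
\[
\Emb_\p(\R^m,\R^n\setminus\{0\})\;\simeq\;\Emb_\p(\R^m,\R^n)\times S^{n-m-1}
\]
by comparing both sides to $\Emb_\p(\R^m\sqcup\{*\},\R^n)$. The $S^{n-m-1}$ is the homotopy type of the complement of a long knot, established via Alexander duality and simple connectivity (this is where $n-m\geq 3$ enters). So the ``configuration-space argument'' you allude to is exactly right in spirit: the extra sphere records the position of an auxiliary point relative to the knot, a global datum. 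Once this product decomposition is in hand, an elementary homotopy-fiber manipulation (rewriting a homotopy fiber over $\Omega X$ as a homotopy fiber over $\hofiber(Y\to X)$) yields the formulation as $\hofiber\bigl(S^{n-m-1}\to B\Embbar_\p(\R^m,\R^n)\bigr)$.
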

For an explicit definition of the classifying map $S^{n-m-1}\xrightarrow{} 
B\Embbar_\p(\R^m,\R^n)$ appearing in this Theorem we refer to Section \ref{s:emb_long}. 


As a consequence we will in particular be able to express the rational homotopy types of $\Embbar_\p(\R^m,\R^n)$ and $\Embbar(S^m,\R^n)$ through each other, see Corollaries \ref{cor:Qranks}, \ref{cor:rht} below. It is furthermore well known that the rational homotopy type of  $\Embbar_\p(\R^m,\R^n)$, $n-m\geq 3$, may be expressed through hairy graph-complexes.
More precisely, in~\cite{Turchin3} hairy graph-complexes were introduced,  denoted by $\HGC_{\bar A_m,n}$ in this paper,  which were proved
to compute the rational homotopy groups
\beq{eq:hgc_bar}
H_*(\HGC_{\bar A_m,n})\simeq \Q\otimes \pi_*\Embbar_\p(\R^m,\R^n)
\eeq
for $n\geq 2m+2$. The  paper~\cite{FTW} determined the rational homotopy type of the $(m+1)$-st delooping of $\Embbar_\p(\R^m,\R^n)$, $n-m\geq 3$. 
In particular \cite[Theorem~15 and Remark~19]{FTW} improved the equality~\eqref{eq:hgc_bar} to the range $n-m\geq 3$. In that range, the
space $\Embbar_\p(\R^m,\R^n)$ can be disconnected, but since it is an $(m+1)$-loop space its set of connected components forms an abelian group (in fact finitely generated). The cited theorem proves the isomorphism~\eqref{eq:hgc_bar} in degree zero as well. Note, however, that the graph-complex $\HGC_{\bar A_m,n}$
can have non-trivial homology in negative degrees, that has to be ignored.\footnote{In fact the non-positive degree homology $H_{\leq 0}(\HGC_{\bar A_m,n})$, that includes the negative degree and degree zero, is at most one-dimensional for $n-m\geq 3$.}

Recently, in~\cite{FTW2} a more general method has been developed by B.~Fresse and the authors to study the rational homotopy type of (connected components of)  embeddings modulo immersions spaces
$\Embbar(L,\R^n)$ and $\Embbar_\p(L,\R^n)$, where $L$ is either a compact submanifold of $\R^{m+1}$  with components of possibly different dimensions, or a closed submanifold whose unbounded connected components  coincide with affine subspaces of $\R^{m+1}$ outside a ball of 
some radius $R$. The main result of~\cite{FTW2} provides  $L_\infty$-algebras of diagrams that express the rational type of such spaces\footnote{This result  uses 
the general theory of Postnikov decompositions of (modules over) reduced operads (i.e. operads whose arity zero component is reduced to a point), which is 
a work in progress
by M.~Mienn\'e~\cite{MienneMemoir}. The theory of Postnikov decompositions
of operads with the empty arity zero component appeared  in Mienn\'e's thesis~\cite{MienneThesis}.}.
In particular, for the first non-trivial case of $L=S^{m}$ the corresponding $L_\infty$-algebra is a hairy graph-complex denoted by
$\HGC_{A_m,n}$.

On the rational homotopy level the comparison of the embedding spaces $\Embbar_\p(\R^m,\R^n)$ and $\Embbar(S^m,\R^n)$ pursued in this paper hence translates into a comparison of the graph-complexes $\HGC_{\bar A_m,n}$ and $\HGC_{A_m,n}$. We shall explain in Section~\ref{s:loneg_emb_graphs} how the relation between the spaces of Theorem \ref{thm:emb_long} can be seen directly (and independently) on the graph-complexes, at least rationally. In fact, this is how we initially discovered our Theorem \ref{thm:emb_long}.
Computations from Section~\ref{s:loneg_emb_graphs} could be useful in further pursuing the graph-complex approach
from~\cite{FTW2} applying it to other types of manifolds.

In the last Section~\ref{s4} we study the case of codimension $n-m\leq 2$.   The main statement of Theorem~\ref{thm:emb_long}
fails in all cases when $n-m=2$, but still holds when $n-m=1$ provided $n=3$ or~$7$.



\section{Spherical and long embeddings}\label{s:emb_long}
In this section  we describe how the homotopy type of $\Embbar(S^m,\R^n)$ is compared to  that of $\Embbar_\p(\R^m,\R^n)$, and in particular prove Theorem \ref{thm:emb_long}. Throughout Sections~\ref{s:emb_long} and~\ref{s:loneg_emb_graphs}  we assume $n-m\geq 3$.

\subsection{Proof of Theorem \ref{thm:emb_long}}


The second statement of the theorem holds because the sphere $S^{n-m-1}$ is simply connected.
By the Smale-Hirsch theorem \cite{Hirsch,Smale}, $\Imm_\p(\R^m,\R^n)\simeq \Omega^m V_m(\R^n)$, where $V_m(\R^n)=\SO(n)/\SO(n{-}m)$ is the Stiefel manifold
of orthogonal $m$-frames in $\R^n$. Thus
\beq{equ:embbar_def}
\Embbar_\p(\R^m,\R^n)\simeq \hofiber\Bigl( \Emb_\p(\R^m,\R^n) \xrightarrow{D} \Omega^mV_m(\R^n)\Bigr).
\eeq
The space $\Embbar_\p(\R^m,\R^n)$ is an $(m+1)$-loop space~\cite{WBdB2,DucT,Sakai}. We denote by $B\Embbar_\p(\R^m,\R^n)$ its classifying space
and by $g$ the map
\beq{equ:map_g}
g\colon\Omega_*^mV_m(\R^n)\simeq B\Omega^{m+1}V_m(\R^n)\to  B\Embbar_\p(\R^m,\R^n)
\eeq
(where $\Omega_*$ stays for the loop space  component of the constant map) obtained by applying the classifying space functor $B$ to the inclusion $\Omega^{m+1}V_m(\R^n)\to \Embbar_\p(\R^m,\R^n)$. 

Consider also the map
\beq{equ:map_h}
h\colon S^{n-m-1}\to \Omega^mV_m(\R^n)
\eeq
adjoint to the composition
\beq{equ:map_h0}
\Sigma^mS^{n-m-1} =S^{n-1}\xrightarrow{h_0} \SO(n)\to \SO(n)/\SO(n{-}m)=V_m(\R^n),
\eeq
where $h_0$ is the transition map for the tangent bundle of $S^n=D^n_+\cup_{S^{n-1}}D^n_-$ relating trivializations over the upper and lower discs
$D^n_+$ and $D^n_-$. Note that since we assume $n-m\geq 3$,
the sphere $S^{n-m-1}$ is connected and $h(S^{n-m-1})\subset \Omega_*^mV_m(\R^n)$.

To show Theorem \ref{thm:emb_long} we will check explicitly that for $n-m\geq 3$ one has an equivalence
\beq{equ:emb_long}
\Embbar(S^m,\R^n)\simeq\hofiber\left(S^{n-m-1}\xrightarrow{g\circ h}B\Embbar_\p(\R^m,\R^n)\right).
\eeq

In other words, the space $\Embbar(S^m,\R^n)$ is equivalent to a principal $\Embbar_\p(\R^m,\R^n)$-bundle over $S^{n-m-1}$ with the structure subgroup
$\Omega^{m+1}V_m(\R^n)\subset\Embbar_\p(\R^m,\R^n)$. 
%

The equivalence \eqref{equ:emb_long} and hence Theorem~\ref{thm:emb_long} can be shown using the following two propositions. 

\begin{prop}\label{prop:hofib1}
For $n-m\geq 3$, one has an equivalence
\[
\Embbar(S^m,\R^n)\simeq\hofiber\left(\Emb_\p(\R^m,\R^n)\times S^{n-m-1}\xrightarrow{m\circ(D\times h)}\Omega^mV_m(\R^n)\right),
\]
where $m\colon \Omega^mV_m(\R^n)\times\Omega^mV_m(\R^n)\to \Omega^mV_m(\R^n)$ is a loop product. 
\end{prop}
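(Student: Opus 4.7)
The plan is to reformulate the right-hand side as a more familiar homotopy pullback and realise $\Embbar(S^m,\R^n)$ as this pullback via a geometric construction. Because $\Omega^mV_m(\R^n)$ is a grouplike loop space and the structural map in the hofiber is the loop product $m\circ(D\times h)$, a standard manipulation using a homotopy inverse identifies
\[
\hofiber\bigl(\Emb_\p(\R^m,\R^n)\times S^{n-m-1}\xrightarrow{m\circ(D\times h)}\Omega^mV_m(\R^n)\bigr)
\]
with the homotopy pullback of the defining fibration $\Embbar_\p(\R^m,\R^n)\to \Emb_\p(\R^m,\R^n) \xrightarrow{D} \Omega^mV_m(\R^n)$ along $h$. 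In particular, this space fibres over $S^{n-m-1}$ with fibre $\Embbar_\p(\R^m,\R^n)$.

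I would then construct the comparison map
\[
\Embbar(S^m,\R^n)\longrightarrow \hofiber\bigl(\Emb_\p(\R^m,\R^n)\times S^{n-m-1}\xrightarrow{m\circ(D\times h)}\Omega^mV_m(\R^n)\bigr)
\]
as follows. Given $(\Phi,\Gamma)\in\Embbar(S^m,\R^n)$, with $\Gamma$ a path in $\Imm(S^m,\R^n)$ from $i_1$ to $\Phi$, use $\Gamma$ to transport the standard outward unit normal of $i_1(S^m)\subset\R^{m+1}\subset\R^n$ at the north pole $N$ through the family of immersed spheres, producing a unit normal vector at $\Phi(N)$, i.e.\ an element $s\in S^{n-m-1}$. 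Using $s$ to choose an appropriately twisted stereographic projection from $N$, and modifying $\Phi$ in a neighbourhood of $N$ with data supplied by $\Gamma$ and $s$, one obtains a long embedding $\phi\in \Emb_\p(\R^m,\R^n)$ standard at infinity. The derivative data of $\Gamma$ then supplies a path $\gamma$ in $\Omega^mV_m(\R^n)$ from the basepoint to $D(\phi)\cdot h(s)$, where the $h(s)$-twist arises from the same clutching identifications used in the previous step.

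To verify that the comparison map is an equivalence, I would compare both sides as fibrations over $S^{n-m-1}$: on the target side by projection onto the $S^{n-m-1}$ factor; on the source side by $(\Phi,\Gamma)\mapsto s$. The fibres are $\Embbar_\p(\R^m,\R^n)$ on both sides: on the target by the pullback description; on the source by identifying the fibre over a fixed $s_0$ with the image of the perturbation inclusion \eqref{eq:Psi} via an isotopy-extension argument. Because the comparison map respects these identifications, an equivalence on fibres over a path-connected base yields an equivalence on total spaces.

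The main technical obstacle is the geometric step producing $\phi$ and the correct $h(s)$-twist. A naive stereographic projection from $N$ sends a sphere embedding to a map $\R^m\to\R^n$ whose image remains bounded, so does not produce an element of $\Emb_\p(\R^m,\R^n)$. The discrepancy between ``spherical near $N$'' and ``standard linear at infinity'' is precisely the clutching datum of the tangent bundle of $S^n$, which is how $h_0\colon S^{n-1}\to\SO(n)$ and hence $h$ enter. Ensuring that the $h(s)$-twist arises naturally from the construction, rather than being inserted by hand, is the technical heart of the proof.
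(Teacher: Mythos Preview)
Your high-level strategy is reasonable, but the route you take diverges from the paper's and leaves the hardest step undefined. The paper's key simplification, which you do not make, is to replace $\Embbar(S^m,\R^n)$ at the outset by the weakly equivalent space $\Embbar_*(S^m,\R^n)$ of embeddings modulo immersions with \emph{fixed germ} near a basepoint $*\in S^m$. Once the germ is fixed, there is nothing to ``transport'' and nothing to ``modify'': one has the literal identifications $\Emb_*(S^m,\R^n)\cong\Emb_\p(D^m_+,D^n_+\setminus\{N\})$ and $\Imm_*(S^m,\R^n)\simeq\Omega^mV_m(\R^n)$ coming from the decompositions $S^m=D^m_+\cup D^m_-$ and $S^n\setminus\{N\}=(D^n_+\setminus\{N\})\cup D^n_-$. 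The $S^{n-m-1}$ factor then appears not as a transported normal vector but as the linking sphere of the deleted point: the paper introduces the auxiliary space $\Emb_\p(\R^m\sqcup\{*\},\R^n)$ and, via two evident restriction fibrations and Alexander duality, obtains an explicit weak equivalence $A\colon\Emb_\p(\R^m,\R^n)\times S^{n-m-1}\xrightarrow{\simeq}\Emb_\p(\R^m,\R^n\setminus\{0\})$ sending $(f,v)$ to a long knot that equals $f$ in one ball and bumps toward $v$ in another. The clutching map $h$ then enters for free, because the identification $\R^n\setminus\{0\}\cong D^n_+\setminus\{N\}$ intertwines the coordinate framing on $\R^n$ with the local framing near $N$ precisely via $h_0$.

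By contrast, your comparison map is not actually defined. Transporting a unit normal along a path of immersions requires a choice of connection on the normal bundle over $[0,1]$; parallel transport would work, but you do not say this, and in any case you then need the resulting assignment $(\Phi,\Gamma)\mapsto s$ to be a fibration, which needs argument. More seriously, the passage from $\Phi$ to a long embedding $\phi$ is only described as ``modifying $\Phi$ in a neighbourhood of $N$''; since $\Phi$ is arbitrary near $N$, this modification is exactly the content of the equivalence $\Embbar(S^m,\R^n)\simeq\Embbar_*(S^m,\R^n)$, and doing it coherently in families while simultaneously controlling the immersion path is the work you are deferring. Your fiber identification over a fixed $s_0$ likewise presupposes this equivalence. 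In short, the step you flag as the ``technical heart'' is precisely what the paper's reduction to $\Embbar_*$ makes unnecessary: after that reduction the $h(s)$-twist is forced by the chart transition on $S^n$ rather than engineered by hand.
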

To recall $D$ and $h$ denote the maps from~\eqref{equ:embbar_def} and~\eqref{equ:map_h}.
Proposition~\ref{prop:hofib1} is related to and in fact is a consequence of   Budney-Cohen's 
\cite[Proposition 4.4]{BudneyCohen}.
For completeness of exposition we present its full proof below.

\begin{prop}\label{prop:hofib2}
Let $Y\xrightarrow{f}X$ be a map of pointed spaces, $Z\xrightarrow{h}\Omega X$ be any map. Let also 
$\Omega X\xrightarrow{i}\hofiber(Y\xrightarrow{f}X)$ denote the natural inclusion and $m\colon\Omega X\times\Omega X\to
\Omega X$ denote the loop product. One has an equivalence
\beq{equ:hofib2}
\hofiber\left(Z\xrightarrow{i\circ h}\hofiber(Y\xrightarrow{f}X)\right)\simeq
\hofiber\left(\Omega Y\times Z\xrightarrow{m\circ(\Omega f\times h)}\Omega X\right).
\eeq
\end{prop}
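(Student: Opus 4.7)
The plan is to deduce the equivalence from two standard manipulations of homotopy fibers together with the group-like structure of $\Omega X$. (Note that the map $Z \to \hofiber(f)$ in the statement must be read as $i \circ h$, since $i \circ f$ does not type-check.)

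First I would extend the defining fiber sequence $\hofiber(f) \to Y \xrightarrow{f} X$ to the left, obtaining
\[
\Omega Y \xrightarrow{\Omega f} \Omega X \xrightarrow{i} \hofiber(f) \to Y \xrightarrow{f} X,
\]
so that the homotopy fiber of $i$ is canonically identified with $\Omega Y$, and the resulting map $\hofiber(i) \to \Omega X$ is $\Omega f$ (up to the usual sign coming from the standard identification). Next I would apply the general ``fiber of a composition'' lemma: for any composable pair $A \xrightarrow{u} B \xrightarrow{v} C$ of pointed maps, one has a natural equivalence
\[
\hofiber(v \circ u) \simeq A \times^h_B \hofiber(v),
\]
where the right-hand side is the homotopy pullback along $u$ and the canonical map $\hofiber(v) \to B$. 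Taking $A = Z$, $u = h$, $B = \Omega X$, $v = i$, this gives
\[
\hofiber(i \circ h) \;\simeq\; Z \times^h_{\Omega X} \Omega Y,
\]
the homotopy pullback of $h$ and $\Omega f$ over $\Omega X$.

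Finally I would convert this homotopy pullback into the homotopy fiber of a single map into $\Omega X$, using that $\Omega X$ is a group-like topological monoid. Concretely, the shear self-homeomorphism $\Omega X \times \Omega X \to \Omega X \times \Omega X$, $(x,y) \mapsto (x \cdot y^{-1}, y)$, carries the diagonal onto $\{*\} \times \Omega X$, and pulling back yields
\[
Z \times^h_{\Omega X} \Omega Y \;\simeq\; \hofiber\bigl(\Omega Y \times Z \xrightarrow{(\alpha,z) \mapsto \Omega f(\alpha) \cdot h(z)^{-1}} \Omega X\bigr).
\]
Since the $H$-space inversion on $\Omega X$ is a self-equivalence and swapping/inverting factors does not change the homotopy type of the fiber, this map is equivalent to $m \circ (\Omega f \times h)$, yielding the claimed equivalence~\eqref{equ:hofib2}.

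The main technical point in this argument is the last step: checking that the identifications between the various versions of ``multiplication with one factor inverted'' do match $m \circ (\Omega f \times h)$ up to a self-equivalence of $\Omega X$, i.e.\ keeping track of order and sign conventions. Once that is settled the rest consists of standard pullback/pushout manipulations.
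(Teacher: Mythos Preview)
Your argument is correct and takes a genuinely different route from the paper's. The paper proceeds by writing down the explicit point-set model of the homotopy fiber (pairs $(y,x)$ with $x$ a path in $X$ from the basepoint to $f(y)$), unwinding both sides of \eqref{equ:hofib2}, and observing that each is literally homeomorphic to the same space of triples $(z,y,x)$ with $z\in Z$, $y\in\Omega Y$, and $x\colon D^2\to X$ a disc whose boundary is the concatenation of $h(z)$ with $(\Omega f)(\bar y)$. No fiber-sequence machinery or shearing is invoked; it is a direct identification.

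Your approach is more categorical: extend the fiber sequence to identify $\hofiber(i)\simeq\Omega Y$ over $\Omega X$, apply the pasting lemma to rewrite $\hofiber(i\circ h)$ as the homotopy pullback $Z\times^h_{\Omega X}\Omega Y$, and then shear. Each step is standard and citable, which makes the argument transparent and easy to transport to other settings (e.g.\ an $\infty$-categorical one). The paper's proof, by contrast, gives an explicit homeomorphism with no intermediate equivalences, which is shorter and makes the sign and ordering conventions in $m\circ(\Omega f\times h)$ visible rather than something to be checked separately.

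Your caveat about the last step is warranted and worth making precise. After shearing you obtain the homotopy fiber of $(\alpha,z)\mapsto (\Omega f)(\alpha)\cdot h(z)^{-1}$ (or a variant thereof, depending on the sign in the identification $\hofiber(i)\simeq\Omega Y$). To reach $m\circ(\Omega f\times h)$ you need both a postcomposition with inversion on $\Omega X$ \emph{and} a precomposition with inversion on the $\Omega Y$ factor; the latter is available precisely because $\Omega Y$ is itself a loop space and $\Omega f$ intertwines path-reversal. So the sentence ``up to a self-equivalence of $\Omega X$'' should really read ``up to self-equivalences of the source and target'', and you should note explicitly that inverting on the $\Omega Y$ factor is what absorbs the stray sign. (You also correctly spotted that $i\circ f$ in the statement is a typo for $i\circ h$.)
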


\begin{proof}[Proof of Theorem~\ref{thm:emb_long}.]
We apply Proposition~\ref{prop:hofib1} and Proposition~\ref{prop:hofib2} to
the case $Y\xrightarrow{f}X$ being $B\Emb_\p(\R^m,\R^n)\xrightarrow{BD}\Omega_*^{m-1}V_m(\R^n)$, and 
$Z\xrightarrow{h}\Omega X$ being $S^{n-m-1}\xrightarrow{h}\Omega^mV_m(\R^n)$.
One has, $\Omega B\Emb_\p(\R^m,\R^n)\simeq \Emb_\p(\R^m,\R^n)$,
because $\pi_0 \Emb_\p(\R^m,\R^n)$, $n-m\geq 3$, is a group~\cite{Haefliger1,Haefliger2}. (Explicit
deloopings of  $\Emb_\p(\R^m,\R^n)$, $n-m\geq 3$, were obtained in~\cite{WBdB2,DucT,Sakai}.) Note that
\beq{eq:hof_discon}
\hofiber\left(B\Emb_\p(\R^m,\R^n)\xrightarrow{BD}\Omega_*^{m-1}V_m(\R^n)\right) \simeq \Omega^mV_m(\R^n)
\sslash \Emb_\p(\R^m,\R^n).
\eeq
The sphere $S^{n-m-1}$ is connected and each connected component of~\eqref{eq:hof_discon} 
 is equivalent to $B\Embbar_\p(\R^m,\R^n)$, which  immediately yields~\eqref{equ:emb_long}.

\end{proof}

\begin{proof}[Proof of Proposition~\ref{prop:hofib1}.]
Denote by $\Emb_*(S^m,\R^n)$ and $\Imm_*(S^m,\R^n)$ the spaces of embeddings and immersions, respectively, with a fixed behavior near
the basepoint $*\in S^m$. One can easily see that the space
\[
\Embbar_*(S^m,\R^n):=\hofiber\left(\Emb_*(S^m,\R^n)\xhookrightarrow{I}\Imm_*(S^m,\R^n)\right)
\]
is weakly equivalent to $\Embbar(S^m,\R^n)$. Moreover, we claim that $\Emb_*(S^m,\R^n)\simeq \Emb_\p(\R^m,\R^n)\times S^{n-m-1}$ and
$\Imm_*(S^m,\R^n)\simeq\Omega^mV_m(\R^n)$ with the map $I$ of the homotopy type of $m\circ(D\times h)$.


We decompose $S^m=D^m_+\cup_{S^{m-1}} D^m_-$, where $D^m_-$ is a small closed disc neighborhood of the basepoint $*\in S^m$, and $D^m_+$ is its
complementary disc.  We identify $\R^n=S^n\setminus\{N\}$ as a sphere without its north pole. Similarly we decompose $S^n\setminus\{N\}=
\left(D^n_+\setminus\{N\}\right)\cup_{S^{n-1}}D^n_-$. One has
\beq{equ:emb*}
\Emb_*(S^m,\R^n)\cong\Emb_\p(D^m_+,D^n_+\setminus\{N\}),
\eeq
\beq{equ:imm*}
\Imm_*(S^m,\R^n)\cong\Imm_\p(D^m_+,S^n\setminus\{N\})\simeq\Omega^mV_m(\R^n).
\eeq
The last equivalence in~\eqref{equ:imm*} is by the Smale-Hirsch theorem, as the target manifold $S^n\setminus\{N\}=\R^n$ is contractible. 

\begin{rem}\label{rem:framing}
The transition map between the coordinate framing on $\R^n=S^n\setminus\{N\}$ and the local coordinates framing near~$N$, when restricted on a small $(n-1)$-sphere around $N$, is given by the map $h_0$ from equation~\eqref{equ:map_h0}.
\end{rem}

Consider the space $\Emb_\p(\R^m\sqcup\{*\},\R^n)$, where $\R^m\sqcup\{*\}$ is given the disjoint union topology. Below we define maps
\beq{equ:ABC}
\begin{tikzcd}
\Emb_\p(\R^m,\R^n)\times S^{n-m-1}\ar{d}{A} \ar{dr}{B} \\ 
\Emb_\p(\R^m,\R^n\setminus\{0\})\ar{r}{C} & \Emb_\p(\R^m\sqcup\{*\},\R^n)
\end{tikzcd}\, .
\eeq

The map $C$ is the inclusion sending $f\mapsto\tilde f$, where 
$$
\tilde f(x)=
\begin{cases}
f(x),&x\in\R^m;\\
0,&x=*.
\end{cases}
$$

By $S^{n-m-1}$ we understand the unit sphere in $\R^{n-m}$. Map $B$ sends a pair $(f,v)$ to $\tilde f$, such that $\tilde f(*)=0^m\times v$ and
$\tilde f|_{\R^m}$ is supported in the unit ball with center ${-}3\times 0^{m-1}$ and sending this ball inside the unit ball centered at ${-3}\times 0^{n-1}$. 
We use here the homeomorphism $\Emb_\p(\R^m,\R^n)\cong \Emb_\p(D^m,D^n)$ induced by a diffeomorphism
between $\R^n$ and the interior $int(D^n)$ of $D^n$, that sends  $\R^m$ to $int(D^m)$.

Finally we define $A$. Let $\rho\colon\R^m\to [0,1]$ be a smooth bump function  supported in the unit disc $D^m$. The map $A$ sends $(f,v)$ to 
$\tilde f\colon\R^m\hookrightarrow\R^n$ supported in the union of two unit discs with centers ${-}3\times 0^{m-1}$ and $0^m$. Inside the first disc $\tilde f$ 
is defined in the same way as in the case of map~$B$, while inside the second disc $\tilde f(x)=(x,-\rho(x)v)$.

\begin{lemma}\label{l:ABC}
For $n-m\geq 3$, all three maps $A$, $B$, $C$ in~\eqref{equ:ABC} are weak homotopy equivalences. Moreover, $B$ is homotopic to $C\circ A$.
\end{lemma}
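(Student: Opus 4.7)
My plan has four steps: show $C$ is a weak equivalence, show $B$ is a weak equivalence, show $B \simeq C \circ A$, and deduce $A$ is a weak equivalence by the two-out-of-three property. The equivalences of $C$ and $B$ will fall out of the two natural Serre fibrations on $\Emb_\p(\R^m \sqcup \{*\}, \R^n)$; the homotopy $B \simeq C \circ A$ is the main technical input.

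For $C$, I would use the evaluation map
\[
\ev_* \colon \Emb_\p(\R^m \sqcup \{*\}, \R^n) \to \R^n, \quad (g, p) \mapsto p,
\]
which is a Serre fibration by parametric isotopy extension, with fiber over $0$ identified with $\Emb_\p(\R^m, \R^n \setminus \{0\})$ via the inclusion $C$. Since $\R^n$ is contractible, $C$ is a weak equivalence. For $B$, I would use the forgetful Serre fibration
\[
q \colon \Emb_\p(\R^m \sqcup \{*\}, \R^n) \to \Emb_\p(\R^m, \R^n), \quad (g, p) \mapsto g,
\]
whose fiber over $g$ is $\R^n \setminus g(\R^m) \simeq S^{n-m-1}$. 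The composition $q \circ B$ is the ``bubbling'' self-map $f \mapsto \tilde f$ of $\Emb_\p(\R^m, \R^n)$, which is homotopic to the identity via a homotopy shrinking the bubble to a point. On fibers, $B$ restricts to the canonical inclusion $v \mapsto (0, v)$ of $S^{n-m-1}$ into the complement $\R^n \setminus \tilde f(\R^m)$, which is a weak equivalence. The five-lemma applied to the long exact sequences of homotopy groups of the two fibrations then yields that $B$ is a weak equivalence.

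The main obstacle is constructing the homotopy $B \simeq C \circ A$. Both maps agree on the bubble region near $-3 e_1$, so it suffices to work in a neighborhood of the origin of $\R^n$, where $B(f, v)$ gives (standard embedding, $* = (0, v)$) and $C \circ A(f, v)$ gives (bump embedding $x \mapsto (x, \rho(|x|) v)$, $* = 0$). I would split the homotopy into two phases. In Phase~1, apply the compactly supported ambient isotopy $\phi_s(x, y) = (x, y + s \rho(|x|) v)$ for $s \in [0, 1]$, which transports the standard embedding to the bump embedding and carries the point $(0, v)$ to $(0, 2v)$; this gives a homotopy from $B(f, v)$ to the intermediate pair (bump embedding, $(0, 2v)$). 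In Phase~2, keeping the bump embedding fixed, slide the point from $(0, 2v)$ down to $0$ along a continuous family (in $v \in S^{n-m-1}$) of paths in the complement $\R^n \setminus f_{CA}(\R^m)$.

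Phase~2 is the technically delicate step: the naive straight line from $(0, 2v)$ to $0$ passes through the bump tip $(0, v)$, which lies in the image of the embedding, so a detour is necessary. The codimension condition $n - m \geq 3$ makes the complement simply connected, so such a detour exists at every fixed $v$, but continuity across the whole sphere $S^{n-m-1}$ requires care. I would design the path as a concatenation of segments using a perturbation in a $v$-dependent $\R^{n-m}$-direction orthogonal to $v$, obtained by patching over an open cover of $S^{n-m-1}$ and interpolating with a partition of unity (exploiting that the space of valid paths at each $v$ is contractible, so there is no global obstruction to assembling them). Verifying avoidance on each segment reduces to elementary inequalities between the path's $v$-component and the bump profile $\rho(|x|)$, coordinated by suitably choosing the relative speeds of the $\R^m$- and $\R^{n-m}$-motions. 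Concatenating Phases~1 and~2 yields the homotopy $B \simeq C \circ A$, and then two-out-of-three applied to this homotopy together with the weak equivalences $B$ and $C$ gives that $A$ is a weak equivalence.
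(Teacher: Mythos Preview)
Your arguments for $C$ and $B$ being weak equivalences, and the two-out-of-three deduction for $A$, match the paper's proof exactly (the paper uses the same two fibrations and the same Alexander-duality/five-lemma reasoning). The paper declares the homotopy $B\simeq C\circ A$ ``obvious'' without further comment; your two-phase construction is a reasonable attempt to fill this in, but Phase~2 contains a genuine gap.

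The assertion that ``the space of valid paths at each $v$ is contractible'' is false. Straightening the bump by the shear $(x,y)\mapsto(x,y-\rho(|x|)v)$ identifies the complement of the bump with $\R^m\times(\R^{n-m}\setminus 0)\simeq S^{n-m-1}$ and sends the two endpoints $(0,2v)$ and $0$ to $(0,v)$ and $(0,-v)$; hence the space of valid paths at a fixed $v$ is homotopy equivalent to $\Omega S^{n-m-1}$, which is only $(n-m-3)$-connected. Simple connectivity of the complement (from $n-m\ge3$) tells you this path space is \emph{connected}, not contractible, and that is not enough for a partition-of-unity patching over the base $S^{n-m-1}$: producing your Phase~2 family is literally the same as exhibiting a homotopy from the identity to the antipodal map on $S^{n-m-1}$, which does not exist when $n-m$ is odd. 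A workaround that still yields the weak-equivalence conclusions: running the shear the other way on $C\circ A$ (flattening the bump) already gives a homotopy $C\circ A\simeq B\circ(\mathrm{id}\times a)$ with $a$ the antipodal map of $S^{n-m-1}$, and since $a$ is a homeomorphism this suffices to conclude that $A$ is a weak equivalence by two-out-of-three.
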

\begin{proof}
Consider two fibrations 
\begin{align*}
\pi_1\colon\Emb_\p(\R^m\sqcup\{*\},\R^n)&\to\R^n;\\
\pi_2\colon\Emb_\p(\R^m\sqcup\{*\},\R^n)&\to\Emb_\p(\R^m,\R^n)
\end{align*}
obtained by restricting embeddings to one of the components: $\{*\}$ or $\R^m$. 

Since the target of $\pi_1$ is contractible, the inclusion of the fiber in the total space is an equivalence,
 implying that $C$ is a weak equivalence.

The map $B$ is a morphism of fiber bundles over $\Emb_\p(\R^m,\R^n)$. By applying the Alexander duality and the fact that both $S^{n-m-1}$ and the fiber of $\pi_2$
(the complement of a knot) are simply connected, we get that $B$ induces an equivalence of fibers, therefore is an equivalence of total spaces. 

It is obvious that $B\simeq C\circ A$. By the two out of three property, $A$ is also a weak equivalence.
\end{proof}

To finish the proof of Proposition~\ref{prop:hofib1}, one has to show that the composition
\begin{multline}\label{equ:tildeA}
J\colon\Emb_\p(\R^m,\R^n)\times S^{n-m-1}\xrightarrow[A]{\simeq}
\Emb_\p(\R^m,\R^n\setminus\{0\})\xrightarrow{\cong}\\ \Emb_\p(D^m_+,D^n_+\setminus\{N\})\to
\Imm_\p(D^m_+,S^n\setminus\{N\})\xrightarrow{\simeq}\Omega^mV_m(\R^n)
\end{multline}
is homotopic to $m\circ(D\times h)$. It is obvious that $J$ restricted to the first factor $\Emb_\p(\R^m,\R^n)$ is homotopic to $D$.
It follows from Remark~\ref{rem:framing}, that $J$ restricted on the second factor $S^{n-m-1}$ is homotopic to $h$. 
Also by construction $J$ is a concatenation of the loop obtained from the first factor with the loop obtained from the second factor,
which is exactly what the formula $m\circ(D\times h)$ means.
\end{proof}

\begin{proof}[Proof of Proposition~\ref{prop:hofib2}.]
Recall the standard construction of the homotopy fiber of a map $Y\xrightarrow{f}X$. It is the space of pairs $(y,x)$, where $y\in Y$ and
$x\colon[0,1]\to X$, such that $x(0)=*$ and $x(1)=f(y)$. When this construction is applied, both spaces in~\eqref{equ:hofib2} are
homeomorphic to the space of triples $(z,y,x)$, where $z\in Z$, $y\in\Omega Y$, $x\colon D^2\to X$, such that $x|_{\p D^2}$ is the loop 
$m\left(h(z),(\Omega f)(y)\right)$. 
\end{proof}

\subsection{Corollaries for the rational homotopy types}

The rational homotopy $\pi_*^\Q S^{n-m-1}$ is spanned by the spherical class $\iota\in\pi_{n-m-1}^\Q S^{n-m-1}$  and the Hopf class
$[\iota,\iota]\in\pi_{2n-2m-3}^\Q S^{n-m-1}$, which is non-zero only if $n-m$ is odd.
The induced map in the rational homotopy $h_*\colon\pi^\Q_*S^{n-m-1}\to\pi_*^\Q\Omega^mV_m(\R^n)$, sends the spherical class $\iota$ to the
$\SO(n)$ Euler class, if $n$ is even, and sends it to zero if $n$ is odd. The Hopf class $[\iota,\iota]$ of $S^{n-m-1}$ is sent to zero, because
the rational homotopy of any loop space is an abelian Lie algebra. Recall also that the induced map $g_*\colon\pi_*^\Q\Omega^mV_m(\R^n)
\to \pi_*^\Q B\Embbar_\p(\R^m,\R^n)$, sends the $\SO(n)$ Euler class to the graph-cycle
\[
D=\begin{tikzpicture}[baseline=-.65ex,every loop/.style={}]
\node (v) at (0,0) {$\omega$};
\node [int] (w) at (1,0) {};
\draw (v) edge (w) (w) edge[loop] (w);
\end{tikzpicture}
\]
in $\HGC_{\bar A_m,n}$, see~\cite{Turchin3,FTW,KhorWillw}, which is non-zero only if $n$ is even.  

Together with Theorem~\ref{thm:emb_long}, the computations above  immediately imply:

\begin{cor}\label{cor:Qranks}
For $n-m\geq 3$, one has $\rk\pi_i^\Q\Embbar(S^m,\R^n)=\rk\pi_i^\Q\Embbar_\p(\R^m,\R^n)$, except
\begin{itemize}
\item for $n$ even,  $\rk\pi_{n-m-2}^\Q\Embbar(S^m,\R^n)=\rk\pi_{n-m-2}^\Q\Embbar_\p(\R^m,\R^n)-1$;
\item for $n$ odd,  $\rk\pi_{n-m-1}^\Q\Embbar(S^m,\R^n)=\rk\pi_{n-m-1}^\Q\Embbar_\p(\R^m,\R^n)+1$;
\item for $n-m$ odd,  $\rk\pi_{2n-2m-3}^\Q\Embbar(S^m,\R^n)=\rk\pi_{2n-2m-3}^\Q\Embbar_\p(\R^m,\R^n)+1$.
\end{itemize}
\end{cor}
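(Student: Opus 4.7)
The plan is to exploit Theorem~\ref{thm:emb_long}, which presents $\Embbar(S^m,\R^n)$ as the homotopy fiber of the composite $g\circ h\colon S^{n-m-1}\to B\Embbar_\p(\R^m,\R^n)$, and then read off the ranks of the rational homotopy groups from the associated long exact sequence.

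First, I will write down the long exact sequence of rational homotopy groups for the fiber sequence
\[
\Embbar(S^m,\R^n)\longrightarrow S^{n-m-1}\xrightarrow{g\circ h} B\Embbar_\p(\R^m,\R^n),
\]
tensored with $\Q$ and rewritten using $\pi^\Q_{i+1}(B\Embbar_\p(\R^m,\R^n))=\pi^\Q_i(\Embbar_\p(\R^m,\R^n))$:
\[
\cdots\to\pi^\Q_i\Embbar_\p(\R^m,\R^n)\to\pi^\Q_i\Embbar(S^m,\R^n)\to\pi^\Q_iS^{n-m-1}\xrightarrow{\delta_i}\pi^\Q_{i-1}\Embbar_\p(\R^m,\R^n)\to\cdots,
\]
where $\delta_i$ is determined by $(g\circ h)_*$ under the identification above. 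Since $\pi^\Q_*S^{n-m-1}$ is concentrated in degree $n-m-1$ (spanned by $\iota$) and, when $n-m$ is odd, also in degree $2n-2m-3$ (spanned by the Whitehead product $[\iota,\iota]$), only the degrees $i\in\{n-m-2,n-m-1,2n-2m-4,2n-2m-3\}$ can be affected; in all other degrees the long exact sequence collapses to an isomorphism $\pi^\Q_i\Embbar(S^m,\R^n)\cong\pi^\Q_i\Embbar_\p(\R^m,\R^n)$.

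Next I would insert the pre-computed values of $(g\circ h)_*$ quoted just before the corollary. The class $\iota\in\pi^\Q_{n-m-1}S^{n-m-1}$ is sent by $h_*$ to the $\SO(n)$ Euler class, and then by $g_*$ to the hairy graph cocycle $D$, which is nonzero in $\pi^\Q_{n-m-2}\Embbar_\p(\R^m,\R^n)$ precisely when $n$ is even. The Whitehead class $[\iota,\iota]$ is killed already by $h_*$ because $\Omega^mV_m(\R^n)$ is a loop space and its rational homotopy Lie algebra is abelian. Therefore $\delta_{n-m-1}$ has image of rank $1$ if $n$ is even and rank $0$ if $n$ is odd, while $\delta_{2n-2m-3}$ vanishes identically.

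Finally I would read the three exceptional ranks directly from the resulting four-term exact sequences around the affected degrees. For the piece
\[
\pi^\Q_{n-m-1}\Embbar_\p\hookrightarrow\pi^\Q_{n-m-1}\Embbar(S^m,\R^n)\to\Q\xrightarrow{\delta_{n-m-1}}\pi^\Q_{n-m-2}\Embbar_\p\twoheadrightarrow\pi^\Q_{n-m-2}\Embbar(S^m,\R^n)
\]
(injectivity on the left follows from $\pi^\Q_{n-m}S^{n-m-1}=0$ for $n-m\geq 3$, and surjectivity on the right from $\pi^\Q_{n-m-2}S^{n-m-1}=0$), the rank computation
$\dim\ker\delta_{n-m-1}+\dim\operatorname{im}\delta_{n-m-1}=1$ immediately yields the first two bullet points according to the parity of $n$. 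For the analogous four-term sequence straddling degrees $2n-2m-3$ and $2n-2m-4$ (available only when $n-m$ is odd), the vanishing of $\delta_{2n-2m-3}$ gives the $+1$ in degree $2n-2m-3$ and preserves the rank in degree $2n-2m-4$.

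This proof is essentially bookkeeping; the only subtlety is checking that no two of the four potentially affected degrees coincide, but the inequality $n-m\geq 3$ rules out both $n-m-2=2n-2m-4$ and $n-m-1=2n-2m-3$, so the modifications listed in the three bullets really are all that happens.
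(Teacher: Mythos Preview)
Your approach is exactly the one the paper takes: the corollary is declared to follow ``immediately'' from Theorem~\ref{thm:emb_long} together with the preceding computation of $(g\circ h)_*$, and your write-up is precisely the long exact sequence bookkeeping this entails. One small slip worth noting: for $n-m=3$ you have $\pi^\Q_{n-m}S^{n-m-1}=\pi^\Q_3S^2=\Q\neq 0$ and the degrees $n-m-1$ and $2n-2m-4$ coincide, so your separation into two disjoint four-term sequences and your stated reason for injectivity on the left both fail there; but the conclusion survives unchanged, since the incoming map $\delta_{n-m}$ is in that case exactly $\delta_{2n-2m-3}$, which you have already argued vanishes.
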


 (It follows from Theorem~\ref{thm:emb_long} that 
$\pi_1\Embbar(S^m,\R^n)$ is a quotient group of $\pi_1\Embbar_\p(\R^m,\R^n)$ and therefore  is abelian.) 

Any map from a suspension to an $H$-space is rationally coformal (and also formal). For $n$ odd, the induced map in rational homotopy $(g\circ h)_*$ is zero, and for $n$ even it is
non-zero only on the spherical class $\iota$. This immediately determines the rational homotopy type of $\Embbar(S^m,\R^n)$, $n-m\geq 3$.

\begin{cor}\label{cor:rht}
For $n-m\geq 3$,
\begin{itemize}
\item if $n-m$  or $n$ is even, each component of $\Embbar(S^m,\R^n)$ is rationally equivalent to a product of $K(\Q,j)$'s, in other words, it is coformal with an
abelian Quillen model;
\item if $n$ is odd, $\Embbar(S^m,\R^n)\simeq_\Q \Embbar_\p(\R^m,\R^n)\times S^{n-m-1}$.
\end{itemize}
\end{cor}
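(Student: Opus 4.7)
The approach is to feed Theorem \ref{thm:emb_long} into rational homotopy theory. By that theorem, $\Embbar(S^m,\R^n)$ is the homotopy fiber of $g\circ h\colon S^{n-m-1}\to B\Embbar_\p(\R^m,\R^n)$, so the task is to identify this map and its fiber rationally. The crucial structural fact is that $\Embbar_\p(\R^m,\R^n)$ is an $(m{+}1)$-fold loop space, so $B\Embbar_\p(\R^m,\R^n)$ is an $H$-space. By Hopf's theorem its rationalization $Y_\Q$ is therefore a product $\prod_j K(\Q,j)^{a_j}$ of Eilenberg--MacLane spaces, and a rational map $S^{n{-}m{-}1}\to Y_\Q$ is classified by $H^{n-m-1}(S^{n-m-1};\Q)\otimes\Q^{a_{n-m-1}}$, i.e.\ by its effect on $\pi^\Q_{n-m-1}$, which the computation preceding the corollary evaluates explicitly.

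For the second bullet, when $n$ is odd the map $(g\circ h)_*$ vanishes on both $\iota$ and $[\iota,\iota]$, so $g\circ h$ is rationally null-homotopic. The fiber of a rationally null map $X\to Y$ is $\Omega Y\times X$, hence
\[
\Embbar(S^m,\R^n)\simeq_\Q \Omega B\Embbar_\p(\R^m,\R^n)\times S^{n-m-1}\simeq_\Q\Embbar_\p(\R^m,\R^n)\times S^{n-m-1}.
\]

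For the first bullet the only non-trivial subcase is $n$ even, where $(g\circ h)_*(\iota)=D\neq 0$. By the classification above, $g\circ h$ is then rationally homotopic to $S^{n-m-1}\xrightarrow{\phi_1}K(\Q,n{-}m{-}1)\hookrightarrow Y_\Q$, where we split off the factor containing $D$ as $Y_\Q\simeq K(\Q,n{-}m{-}1)\times B'$. A direct path-space calculation gives $\hofiber(X\to A\times B')\simeq\hofiber(X\to A)\times\Omega B'$, reducing the problem to $\hofiber(\phi_1)$. When $n-m$ is even, $\phi_1$ is a rational equivalence of odd spheres with $K(\Q,n{-}m{-}1)$ and the fiber is rationally contractible. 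When $n-m$ is odd, a short Sullivan model computation---factoring $(\Lambda u,0)\to(\Lambda(x,y),\,dy=x^2)$, $u\mapsto x$, through the acyclic extension $\Lambda u\hookrightarrow\Lambda(u,v)$ with $|v|=2n{-}2m{-}3$ and $dv=u^2$---yields $\hofiber(\phi_1)\simeq_\Q K(\Q,2n{-}2m{-}3)$. In both subcases $\hofiber(\phi_1)\times\Omega B'$ is a product of Eilenberg--MacLane spaces, which is precisely coformality with abelian Quillen model.

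The main point requiring care is the initial claim that the rational homotopy class of $g\circ h$ is determined by the single invariant $(g\circ h)_*(\iota)\in\pi^\Q_{n-m-1}Y$; once this coformality principle for maps from a suspension into an $H$-space is in hand, the rest is bookkeeping plus the two-generator Sullivan model calculation above, and the ranks one reads off agree with Corollary \ref{cor:Qranks}.
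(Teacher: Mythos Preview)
Your argument is correct and is precisely the paper's approach made explicit: the paper compresses the whole thing into the remark that a map from a suspension to an $H$-space is rationally coformal (hence determined by $(g\circ h)_*$ on $\iota$), while you unpack this by writing the rationalized target as a product of Eilenberg--MacLane spaces and computing the fiber case by case. (One harmless slip of terminology: the extension $\Lambda u\hookrightarrow\Lambda(u,v)$ with $dv=u^2$ is not \emph{acyclic}---it is the minimal model of the even sphere $S^{n-m-1}$---but it is already a relative Sullivan algebra over $\Lambda u$, so no factoring is needed and your conclusion $\hofiber(\phi_1)\simeq_\Q K(\Q,2n-2m-3)$ stands.)
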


Only in the case  $n$ odd and $m$ even, the space $\Embbar(S^m,\R^n)$ is not rationally abelian. However, the failure of being non-abelian is only in the rational
factor~$S^{n-m-1}$.

\section{Comparing graph-complexes}\label{s:loneg_emb_graphs}
As described in the introduction the rational homotopy types of both spaces $\Embbar_\p(\R^m,\R^n)$ and $\Embbar(S^m,\R^n)$, $n-m\geq 3$, have known expressions through graph-complexes. The purpose of this section is to illustrate how Theorem~\ref{thm:emb_long} and in particular Corollaries~\ref{cor:Qranks} and~\ref{cor:rht} manifest themselves combinatorially on the graph-complex level.
We shall proceed without using Theorem~\ref{thm:emb_long} directly, but rather by providing independent arguments, thus essentially re-proving (parts of) the theorem rationally.

We will use the notion of (complete) $L_\infty$-algebras and their Maurer-Cartan spaces. 
We adopt Whitehead's grading conventions in which the bracket, higher brackets, and differential of an $L_\infty$-algebra are all of degree $-1$.
We refer the reader to \cite[section 2]{DolRog} for a comprehensive but careful recollection, using the same grading conventions.

\subsection{Hairy graph-complexes}\label{ss:HGC}
In this subsection we describe graph-complexes $\HGC_{\bar A_m,n}$, $\HGC_{A_m,n}$ and their $L_\infty$-algebra structures that
express the rational homotopy type, respectively, of $\Embbar_\p(\R^m,\R^n)$ and $\Embbar(S^m,\R^n)$, $n-m\geq 3$. Here $\bar A_m$ denotes
the  reduced cohomology algebra $\tilde H^*(S^m,\Q)$, and $A_m$ denotes the cohomology algebra $H^*(S^m,\Q)$. The former is spanned by a single element
$\omega$ of degree $m$, while the latter is spanned by $1$ and $\omega$. With Whitehead's grading conventions
\begin{align*}
H_*(\HGC_{\bar A_m,n})=&\Q\otimes\pi_*\Embbar_\p(\R^m,\R^n),\\
H_*(\HGC_{ A_m,n})=&\Q\otimes\pi_*\Embbar(S^m,\R^n),
\end{align*}
and the bracket in graph-complexes corresponds to the Whitehead bracket in the rational homotopy. Note that the latter one is almost always zero according
to Corollary~\ref{cor:rht}.

The graph-complexes are spanned by finite connected graphs with two types of vertices: external ones of valence one (called {\it hairs}) and internal ones
of valence $\geq 3$. Every external vertex is labeled by $\omega$ in case of $\HGC_{\bar A_m,n}$, and either by $\omega$ or by~$1$ in case of $\HGC_{A_m,n}$.
Double edges and tadpoles (edges connecting a vertex to itself) are allowed. Such graphs are required to have at least one hair. Let $E$, $V$, $H$ denote, respectively,
the sets of edges, internal vertices, and $\omega$-hairs of a graph~$\Gamma$. The degree of such graph is 
\[
(n-1)\# E-n\# V-m\# H.
\]
For example the degree of the graph
\[
\begin{tikzpicture}[scale=.7,baseline=-.65ex]
\node[int] (v1) at (-1,0){};
\node[int] (v2) at (0,1){};
\node[int] (v3) at (1,0){};
\node[int] (v4) at (0,-1){};
\node (w1) at (-2,0) {$\omega$};
\node (w2) at (2,0) {$1$};
\node (w3) at (0,-2) {$\omega$};
\draw (v1)  edge (v2) edge (v4) edge (w1) (v2) edge (v4) (v3) edge (v2) edge (v4) (v4) edge (w3) (v3) edge (w2);
\end{tikzpicture}
\]
is $4n-2m-8$. Note that the edges at the hairs we also count as edges, so that the diagram above has 8 edges.
By an {\it orientation} of $\Gamma$ we understand an orientation of its edges and a linear order of its {\it orientation set} $E\cup V\cup H$. Changing orientation of an edge gives the sign $(-1)^n$. Changing the order of the orientation set brings in the Koszul sign of permutation, where edges are assigned degree $n-1$, internal vertices
are assigned  degree $-n$, and $\omega$-hairs are assigned degree $-m$.

The differential on $\HGC_{\bar A_m,n}$ is denoted by $\delta_{split}$: it acts by splitting the vertices into two:
\begin{align}\label{equ:deltasplit}
\delta_{split} \Gamma &= \sum_{v \text{ vertex} }  \pm 
\Gamma\text{ split $v$} 
&
\begin{tikzpicture}[baseline=-.65ex]
\node[int] (v) at (0,0) {};
\draw (v) edge +(-.3,-.3)  edge +(-.3,0) edge +(-.3,.3) edge +(.3,-.3)  edge +(.3,0) edge +(.3,.3);
\end{tikzpicture}
&\mapsto
\sum
\begin{tikzpicture}[baseline=-.65ex]
\node[int] (v) at (0,0) {};
\node[int] (w) at (0.5,0) {};
\draw (v) edge (w) (v) edge +(-.3,-.3)  edge +(-.3,0) edge +(-.3,.3)
 (w) edge +(.3,-.3)  edge +(.3,0) edge +(.3,.3);
\end{tikzpicture}
\end{align}
The differential on $\HGC_{A_m,n}$ is $\delta=\delta_{split}+\delta_{join}$, where $\delta_{split}$ is defined by~\eqref{equ:deltasplit}, while $\delta_{join}$
joins a subset of at least two hairs into one hair, multiplying the decorations, schematically:
\begin{align}\label{equ:deltajoin}
 \delta_{join} 
\begin{tikzpicture}[baseline=-.8ex]
\node[draw,circle] (v) at (0,.3) {$\Gamma$};
\node (w1) at (-.7,-.5) {$a_1$};
\node (w2) at (-.25,-.5) {$a_2$};
\node (w3) at (.25,-.5) {$\dots$};
\node (w4) at (.7,-.5) {$a_k$};
\draw (v) edge (w1) edge (w2) edge (w3) edge (w4);
\end{tikzpicture} 
= 
\sum_{\substack{S\subset {\rm hairs} \\ |S|\geq 2 }} \pm 
\begin{tikzpicture}[baseline=-.8ex]
\node[draw,circle] (v) at (0,.3) {$\Gamma$};
\node (w1) at (-.7,-.5) {$a_1$};
\node (w2) at (-.25,-.5) {$\dots$};
\node[int] (i) at (.4,-.5) {};
\node (w4) at (.4,-1.3) {$\scriptstyle \prod_{j\in S}a_j$};
\draw (v) edge (w1) edge (w2) edge[bend left] (i) edge (i) edge[bend right] (i) (w4) edge (i);
\end{tikzpicture} \, .
\end{align}
Clearly, a summand in~\eqref{equ:deltajoin} is non-zero only if $S$ contains at most one $\omega$-hair. For the signs,
note that each graph $\Gamma'$ in the sums $\delta_{split}\Gamma$ and $\delta_{join}\Gamma$ has exactly one more
vertex and one more edge than the initial graph~$\Gamma$. So, to obtain an (ordered) orientation set of $\Gamma'$, we just add to that  of $\Gamma$ the new vertex and new edge as the first and second elements. The new edge of $\Gamma'$ is oriented towards
its new vertex. In case of $\delta_{split}$ there are two choices which vertex is considered as a new one, but the two
 resulting orientations are equivalent. With this convention, all the signs in~\eqref{equ:deltasplit} and~\eqref{equ:deltajoin}  are positive.

The $r$-th $L_\infty$-operation $\ell_r(\Gamma_1,\ldots,\Gamma_r)$, $r\geq 2$, is zero for $\HGC_{\bar A_m,n}$ and is defined similarly to
$\delta_{join}$ for $\HGC_{A_m,n}$. For example, the (homotopy) Lie bracket has the following form:
\beq{equ:bracketpic}
\left[ 
\begin{tikzpicture}[baseline=-.8ex]
\node[draw,circle] (v) at (0,.3) {$\Gamma_1$};
\draw (v) edge +(-.5,-.7) edge +(-.25,-.7) edge +(0,-.7) edge +(.25,-.7) edge +(.5,-.7);
\end{tikzpicture}
,
\begin{tikzpicture}[baseline=-.65ex]
\node[draw,circle] (v) at (0,.3) {$\Gamma_2$};
\draw (v) edge +(-.5,-.7) edge +(-.25,-.7) edge +(0,-.7) edge +(.25,-.7) edge +(.5,-.7);
\end{tikzpicture}
\right]
=
\sum
\begin{tikzpicture}[baseline=-.8ex]
\node[draw,circle] (v) at (0,.3) {$\Gamma_1$};
\node[int] (i) at (.7,-.5) {};
\draw (v) edge +(-.5,-.7) edge +(0,-.7) edge (i) edge[bend left] (i) edge[bend right] (i);
\node[draw,circle] (vv) at (1.4,.3) {$\Gamma_2$};
\draw (vv) edge (i) edge[bend left] (i) edge[bend right] (i) edge +(0,-.7) edge +(.7,-.7) (i) edge (.7,-1);
\end{tikzpicture}\, ,
\eeq
where the decorations $\omega$ and~$1$ on hairs are multiplied whenever hairs are joined. The sum is taken over pairs of non-empty 
subsets of hairs of $\Gamma_1$ and $\Gamma_2$. More generally, $\ell_r$ is the sum over $r$-tuples of non-empty
subsets of hairs of $\Gamma_1,\ldots,\Gamma_r$ with every summand being a new connected hairy graph, where all selected hairs are joined into one. With our grading conventions each operation $\ell_r$ (as well as the differential) has degree $-1$. The orientation
of each graph in the sum is obtained by concatenating the orientation sets of $\Gamma_1$, $\ldots$, $\Gamma_r$,
and placing the new vertex and new edge in front. The new (hair) edge is again oriented upward --  towards the new vertex.

\subsection{Connected components and Maurer-Cartan elements}\label{ss:MC}
As it is explained in the introduction, and also stated in Theorem~\ref{thm:emb_long}, one has that
\beq{eq:pi0}
\pi_0\Embbar(S^m,\R^n)=\pi_0\Embbar_\p(\R^m,\R^n), \quad n-m\geq 3,
\eeq
 are isomorphic as (abelian) groups, and all components of $\Embbar(S^m,\R^n)$ (as well as all components of $\Embbar_\p(\R^m,\R^n)$)
have the same homotopy type. These groups are almost always finite except two cases:
\begin{enumerate}[label=(\alph*)]
\item $m=2k-1$, $n=4k-1$, $k\geq 2$;
\item $m=4k-1$, $n=6k$,  $k\geq 1$.
\end{enumerate}
In these two cases this group is infinite of rank one~\cite[Corollary~20]{FTW}.\footnote{This fact can also be easily obtained from 
Haefliger's \cite[Corollary~6.7 and Remark~6.8]{Haefliger2}.} In case~(a), an infinite order generator appears as image, under inclusion
\[
\Omega^{2k}V_{2k-1}(\R^{4k-1})\to \Embbar_\p(\R^{2k-1},\R^{4k-1}),
\]
of the $\SO(2k)$ Euler class in $\pi_{2k}V_{2k-1}(\R^{4k-1})=\pi_{2k}(\SO(4k-1)/\SO(2k))$. In case~(b), an infinite order generator corresponds
to the Haefliger trefoil $S^{4k-1}\hookrightarrow \R^{6k}$ \cite{Haefliger1,Haefliger2}. 

By \cite[Corollary 1.3]{FTW2}, the $L_\infty$-algebras $\HGC_{\bar A_m,n}$ and $\HGC_{A_m,n}$ do provide some information about the sets~\eqref{eq:pi0}.
Namely, one has naturally defined finite-to-one maps
\begin{align*}
m\colon \pi_0\Embbar_\p(\R^m,\R^n)&\to\left.\MC\left(\HGC_{\bar A_m,n}\right)\right/{\sim},\\
m\colon \pi_0\Embbar(S^m,\R^n)&\to\left.\MC\left(\HGC_{A_m,n}\right)\right/{\sim}
\end{align*}
from the sets of connected components to the sets of Maurer-Cartan elements modulo gauge equivalence.
Here, \lq\lq{}finite\rq\rq{} can also mean zero, i.e.
some components are not hit.
 Since the $L_\infty$-algebra $\HGC_{\bar A_m,n}$ is abelian,
\[
\left.\MC\left(\HGC_{\bar A_m,n}\right)\right/{\sim}\, = \, H_0\left(\HGC_{\bar A_m,n}\right).
\]
It is not hard to see that $\HGC_{A_m,n}$ in degrees $\leq 0$ can have  only trees with all hairs labeled by $\omega$  \cite[Proposition~5.1]{FTW2}.
Thus, $H_0(\HGC_{A_m,n})= H_0(\HGC_{\bar A_m,n})$ and $\left.\MC(\HGC_{A_m,n})\right/{\sim}\, =\, \left.\MC(\HGC_{\bar A_m,n})\right/{\sim}$,
see \cite[Corollary~ 5.2]{FTW2}. By \cite[Remark~19]{FTW}, 
\[
\Q\otimes\pi_0\Embbar_\p(\R^m,\R^n) \simeq H_0\left(\HGC_{\bar A_m,n}\right).
\]
The latter group is non-trivial (and is $\Q$) exactly in two cases (a) and (b) above. The  Maurer-Cartan elements corresponding to case~(a) are 
multiples of the line graph
\[
L_\omega=
\begin{tikzpicture}[baseline=-.65ex]
\node (v) at (0,0) {$\omega$};
\node (w) at (1,0) {$\omega$};
\draw (v) edge (w);
\end{tikzpicture}.
\]
For case (b), such elements are multiples of the tripod
\[
T_\omega=
\begin{tikzpicture}[baseline=0]
\node (v1) at (-.6,0) {$\omega$};
\node (v2) at (0,0) {$\omega$};
\node (v3) at (.6,0) {$\omega$};
\node [int] (w) at (0,.6) {};
\draw (w) edge (v1) edge (v2) edge (v3);
\end{tikzpicture}.
\]
By \cite[Corollary 1.3]{FTW2}, for an embedding $\psi\in \Embbar_\p(\R^m,\R^n)$ (respectively, $\psi\in \Embbar(S^m,\R^n)$), the rational homotopy
type of the component $\Embbar_\p(\R^m,\R^n)_\psi$ (respectively, $\Embbar(S^m,\R^n)_\psi$) is expressed by the positive degree truncation
of the  $m(\psi)$ twisted $L_\infty$-algebra $\bigl(\HGC_{\bar A_m,n}^{m(\psi)}\bigr){}_{>0}$ (respectively,  $\bigl(\HGC_{A_m,n}^{m(\psi)}\bigr){}_{>0}$).\footnote{See \cite[section 2.1]{DolRog} for the definition of the twisted $L_\infty$-structure on an $L_\infty$-algebra.} Since the
$L_\infty$-algebra $\HGC_{\bar A_m,n}$ is abelian, such a twist has no effect on it, which corresponds to the fact that all connected components of a loop space
have the same homotopy type. In case~(a), the twist by $L_\omega$ changes neither the differential nor the bracket of $\HGC_{A_{2k-1},4k-1}$. 
This is because
for even codimension $n-m$, any graph with  two $\omega$-hairs attached to an internal vertex is zero. The twisting by $T_\omega$ does affect the differential
and the $L_\infty$ structure of $\HGC_{A_{4k-1},6k}$. We do not do it here, but one can show that $\HGC_{A_{4k-1},6k}^{T_\omega}$
is $L_\infty$ isomorphic to the non-deformed one $\HGC_{A_{4k-1},6k}$, which confirms the fact that all components of $\Embbar(S^m,\R^n)$, $n-m\geq 3$,
have the same homotopy type.

\subsection{Computations}
The relation between the long and non-long embedding spaces can be reproduced combinatorially on graph-complexes as follows.
There is an obvious inclusion of $L_\infty$-algebras $\HGC_{\bar A_m,n}\to \HGC_{A_m,n}$ corresponding to the inclusion $\bar A_m\to A_m$.

Let us also consider the following low degree diagrams which are non-zero for certain values of $m$ and $n$.
\begin{align*}
L&=
\begin{tikzpicture}[baseline=-.65ex]
\node (v) at (0,0) {$1$};
\node (w) at (1,0) {$\omega$};
\draw (v) edge (w);
\end{tikzpicture}
&
D&=
\begin{tikzpicture}[baseline=-.65ex,every loop/.style={}]
\node (v) at (0,0) {$\omega$};
\node [int] (w) at (1,0) {};
\draw (v) edge (w) (w) edge[loop] (w);
\end{tikzpicture}
&
T&=
\begin{tikzpicture}[baseline=0]
\node (v1) at (-.6,0) {$1$};
\node (v2) at (0,0) {$\omega$};
\node (v3) at (.6,0) {$\omega$};
\node [int] (w) at (0,.6) {};
\draw (w) edge (v1) edge (v2) edge (v3);
\end{tikzpicture}
\end{align*}
The graph $L$ is of degree $n-m-1$ and is always non-zero; $D$ is of degree $n-m-2$ and is nonzero if $n$ is even; and $T$ is of degree $2n-2m-3$ and is nonzero if and only if $n-m$ is odd.
One has that $dL=D$, i.e., for even $n$ the corresponding classes cancel in homology in $\HGC_{A_m,n}$.
(But not in $\HGC_{\bar A_m,n}$, since $L\notin \HGC_{\bar A_m,n}$.)
Also note that $dT=0$.
Using these classes we can completely describe the relation between $\HGC_{A_m,n}$ and $\HGC_{\bar A_m,n}$ as follows.

\begin{thm}\label{prop:HGCAAbar}
The mapping cone $C$ of the inclusion $\HGC_{\bar A_m,n}\to\HGC_{A_m,n}$ has the following homology, depending on $m$ and $n$:
\begin{itemize}
\item For $m,n$ even $H(C)$ is one-dimensional, spanned by a class whose projection to $\HGC_{\bar A_m,n}$ is $D$.
\item For $n$ even and $m$ odd $H(C)$ is two-dimensional, spanned by a class corresponding to $D$ in $\HGC_{\bar A_m,n}$ as before and the class of $T\in\HGC_{A_m,n}$.
\item For $m,n$ odd $H(C)$ is one-dimensional, spanned by the class of $L$ in $\HGC_{A_m,n}$.
\item For $n$ odd and $m$ even $H(C)$ is two-dimensional, spanned by the class of $L$ and $T$ in $\HGC_{A_m,n}$.
\end{itemize}
\end{thm}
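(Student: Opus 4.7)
The plan is to compute $H(C)$ via the short exact sequence $0\to \HGC_{\bar A_m,n}\to \HGC_{A_m,n}\to Q\to 0$, which gives $H(C)\cong H(Q)$ for $Q:=\HGC_{A_m,n}/\HGC_{\bar A_m,n}$. The quotient $Q$ is spanned by hairy graphs with at least one $1$-hair, equipped with the induced differential $\delta_{split}$ plus those $\delta_{join}$-summands whose output still carries a $1$-hair. First I would verify the behaviour of the three small graphs. The graph $D$ is closed (it has no hairs to join), and its self-loop introduces a sign $(-1)^n$ under orientation reversal, so $D=0$ iff $n$ is odd. For $L$, the absence of an internal vertex gives $\delta_{split}L=0$, while the unique non-trivial join of the two hairs converts the edge into a self-loop on a new internal vertex, yielding $\delta L=\pm D$ in $\HGC_{A_m,n}$. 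For $T$, the trivalent internal vertex admits no valence-admissible split, and the two joins of the $1$-hair with each $\omega$-hair produce the same underlying graph with opposite signs, so $\delta T=0$; an orientation-set computation shows $T$ vanishes precisely when $n-m$ is even. These observations immediately identify the candidate cocycles in $C$: $(D,-L)$ when $n$ is even, $(0,L)$ when $n$ is odd (since then $\delta L=D=0$), and $(0,T)$ when $n-m$ is odd.

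To see these candidates exhaust $H(C)$, I would filter $Q$ by the number $k$ of $1$-hairs: $F_pQ$ consists of graphs with at most $p$ such hairs. Both $\delta_{split}$ (preserving $k$) and the surviving part of $\delta_{join}$ (lowering $k$) respect this increasing filtration, so there is an associated spectral sequence with $E_1$-page given by the $\delta_{split}$-cohomology of $\gr Q$. Since $\delta_{split}$ is oblivious to hair decorations, one further decomposes $\gr Q$ by the pair $(p,q)=($number of $1$-hairs, number of $\omega$-hairs$)$, reducing the $E_1$ computation to an instance of the standard undecorated hairy graph complex with prescribed arities.

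The main technical step, and the hardest part of the argument, is this $\delta_{split}$-cohomology computation. I envision proving, via a contracting-homotopy argument in the spirit of Kontsevich-Willwacher, that all $(p,q)$-pieces are acyclic except for the ``stumpy'' representatives $L$ and $T$ (and, in the subtracted subcomplex $\HGC_{\bar A_m,n}$, the class $D$). The relation $\omega^2=0$ yields additional vanishings for any potential cocycle carrying two $\omega$-hairs adjacent to a common internal vertex.

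Finally I would trace the higher differentials of the spectral sequence. The only relevant one is $d_1$ induced by $\delta_{join}$: it would send $[L]$ to $[D]$, but $[D]$ has been quotiented out in $Q$, so $[L]$ survives to $E_\infty$. The class $[T]$ has no surviving $\delta_{join}$-targets (any join either produces $\omega^2=0$ or removes the sole $1$-hair), so it survives as well. Sorting by the parities of $m$ and $n$ and invoking the vanishing criteria for $D$ and $T$ then reproduces the four cases of the theorem.
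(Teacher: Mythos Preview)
Your identification of the candidate cocycles and their behaviour under $\delta$ is correct, but the core of the argument---the $E_1$ computation---does not go through as stated. Once you filter $Q$ by the number of $1$-hairs, the associated graded carries only $\delta_{split}$, and the $\delta_{split}$-cohomology of the $(p,q)$-pieces is far from being concentrated in $L$ and $T$. For instance the $(p,0)$-summand (all hairs labelled~$1$) with $\delta_{split}$ alone already contains, depending on the parity of~$n$, the classes of $L'$, $D'$, the all-$1$ tripod, and infinitely many higher-loop cocycles; more generally, since the differential on $\HGC_{\bar A_m,n}$ \emph{is} $\delta_{split}$, every nontrivial class in $H(\HGC_{\bar A_m,n})\cong\Q\otimes\pi_*\Embbar_\p(\R^m,\R^n)$ produces, after relabelling some hairs by~$1$, nonzero $\delta_{split}$-classes in the $(p,q)$-pieces. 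There is no contracting homotopy for $\delta_{split}$ alone: every argument of the Kontsevich--Willwacher type uses $\delta_{join}$ as its leading term, and your filtration has deferred $\delta_{join}$ entirely to the higher pages. You would be left showing that an infinite tower $d_1,d_2,\dots$ kills an infinite-dimensional $E_1$, with no visible mechanism for doing so.

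The paper avoids this by changing the complex before computing. It introduces the auxiliary non-unital algebra $A'_m=\Q\epsilon\oplus\Q\omega$ with $\epsilon\omega=0$ and constructs an explicit isomorphism of complexes $\Phi\colon\HGC_{A'_m,n}'\xrightarrow{\cong}\HGC_{A_m,n}'$ (on graphs with $\geq 1$ internal vertex and $\geq 1$ $\omega$-hair), given by summing over all ways of reconnecting $\epsilon$-hairs to internal vertices; conjugation by $\Phi$ is exactly what absorbs the $\delta_{join}$-terms that mix $1$- and $\omega$-hairs. In $\HGC_{A'_m,n}$ the $\epsilon$- and $\omega$-hairs no longer interact, and the subcomplex $U$ of graphs carrying an $\epsilon$-hair is then handled by induction on the number of internal vertices, using as leading differential the piece of $\delta_{join}$ that fuses \emph{all} $\epsilon$-hairs into one new vertex---an injective map whose cokernel is a symmetric power of smaller instances of~$U$. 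It is this inductive scheme, with $\delta_{join}$ as the dominant term rather than discarded, that pins $H(U)$ down to $\Q L\oplus\Q T$ and yields the theorem.
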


\begin{rem}\label{rem:Qranks}
Theorem~\ref{prop:HGCAAbar} provides a different proof of Corollary~\ref{cor:Qranks}.
\end{rem}

The result can alternatively be reformulated as follows.
\begin{cor}\label{cor:HGCAAbar1}
Let $U^t\subset \HGC_{A_m,n}$ be the subspace spanned by trees with exactly one $1$-decorated hair. 
Consider the vector space direct sum $U^t\oplus \HGC_{\bar A_m,n}\subset \HGC_{A_m,n}$ with the induced (subspace) $L_\infty$-structure.
Then the inclusion $U^t\oplus \HGC_{\bar A_m,n}\to \HGC_{A_m,n}$ is a quasi-isomorphism of $L_\infty$-algebras.
\end{cor}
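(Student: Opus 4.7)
The plan is to reduce the claim to a chain-level quasi-isomorphism and then to a statement that follows from Theorem \ref{prop:HGCAAbar} via the five-lemma.

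First I would verify that $U^t \oplus \HGC_{\bar A_m,n}$ is stable under the inherited $L_\infty$-operations of $\HGC_{A_m,n}$, so that the subspace $L_\infty$-structure is well-defined. The content is a combinatorial case analysis: since $\omega^2 = 0$ in $A_m$, any nonzero summand of $\ell_r$ (applied to elements of $U^t \cup \HGC_{\bar A_m,n}$) can merge at most one $\omega$-labelled hair, while joining $|S_i| \geq 2$ hairs from one input always introduces a cycle in that input. Combining these observations, every nonzero output is either a tree with exactly one $1$-hair (lying in $U^t$), a graph with no $1$-hair (lying in $\HGC_{\bar A_m,n}$), or zero; the $r=1$ case shows in particular that $\iota\colon U^t\oplus \HGC_{\bar A_m,n}\hookrightarrow \HGC_{A_m,n}$ is a chain map. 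Since an $L_\infty$-morphism is a quasi-isomorphism iff its linear part is one, the corollary reduces to showing that $\iota$ is a chain-level quasi-isomorphism.

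Next I would set up the map of short exact sequences
\begin{equation*}
\begin{tikzcd}
0 \ar[r] & \HGC_{\bar A_m,n} \ar[r] \ar[d, equals] & U^t \oplus \HGC_{\bar A_m,n} \ar[r] \ar[d, hook, "\iota"] & U^t \ar[r] \ar[d, "\bar\iota"] & 0 \\
0 \ar[r] & \HGC_{\bar A_m,n} \ar[r] & \HGC_{A_m,n} \ar[r] & \HGC_{A_m,n}/\HGC_{\bar A_m,n} \ar[r] & 0,
\end{tikzcd}
\end{equation*}
where on the quotient $U^t$ in the top row the induced differential is simply $\delta_{\rm split}$: any $\delta_{\rm join}$-summand of a tree with a single $1$-hair must include that $1$-hair in the joined subset (else $\omega^2=0$ kills it), yielding a graph with no $1$-hair and hence a class in $\HGC_{\bar A_m,n}$. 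Applying the five-lemma to the induced map of long exact sequences in cohomology, $\iota_*$ is an isomorphism iff so is $\bar\iota_*\colon H(U^t,\delta_{\rm split}) \to H(\HGC_{A_m,n}/\HGC_{\bar A_m,n})$.

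By Theorem \ref{prop:HGCAAbar} the target is spanned by the listed classes of $L$ and (when nonzero) $T$, both of which lie in $U^t$ and are manifest $\delta_{\rm split}$-cycles ($L$ has no internal vertex, and $T$ admits no $V\geq 2$ split by valence). This gives surjectivity of $\bar\iota_*$. For injectivity, decomposing $U^t=\bigoplus_{k\geq 2}U^t_k$ by number $k$ of hairs, it suffices to show $(U^t_k,\delta_{\rm split})$ is acyclic for every $k\geq 4$; a direct sign computation (summing over unordered partitions of the edges at the central vertex and tracking the relabeling isomorphisms to a canonical representative) shows the $V=1$ corolla maps nondegenerately onto the $(2,k-2)$-split tree, and the same mechanism propagates through all intermediate $V$-strata.

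The main obstacle is this final sign computation for $k\geq 4$: careful tracking of the Koszul signs from the degree conventions on edges and hairs, together with the symmetries of the distinguished $1$-hair vs.~the interchangeable $\omega$-hairs. A cleaner route that avoids it is to extract the acyclicity directly from the proof of Theorem \ref{prop:HGCAAbar} itself: the spectral sequence associated to the filtration of $\HGC_{A_m,n}/\HGC_{\bar A_m,n}$ by number of $1$-hairs has its $E_\infty$-page supported on the image of $U^t$, so acyclicity of the remaining pieces (non-tree graphs with $\geq 1$ $1$-hair, and trees with $\geq 2$ $1$-hairs) is effectively already built into the proof of the theorem, and the corollary follows.
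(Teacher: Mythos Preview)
Your reduction via the five-lemma is correct and matches the paper's implicit deduction of the corollary from Theorem~\ref{prop:HGCAAbar}. The closure of $U^t\oplus\HGC_{\bar A_m,n}$ under the $L_\infty$-operations and the identification of the induced differential on the quotient $U^t$ with $\delta_{split}$ are fine. One small point worth making explicit: for $n$ even the theorem phrases the cone class via its projection $D\in\HGC_{\bar A_m,n}$, but under the identification of the cone with the quotient $\HGC_{A_m,n}/\HGC_{\bar A_m,n}$ that class is represented by $L$ (since $dL=D$), so your description of the target as $\Q[L]\oplus\Q[T]$ is correct once this translation is spelled out.

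The genuine gap is the injectivity step, i.e.\ the computation $H(U^t,\delta_{split})=\Q L\oplus\Q T$. Your direct argument (``the same mechanism propagates through all intermediate $V$-strata'') is a gesture, not a proof. Your ``cleaner route'' does not do what you claim either: knowing that $H(\HGC_{A_m,n}/\HGC_{\bar A_m,n})$ is supported on classes coming from $U^t$ says nothing about possible extra classes in $H(U^t)$ that could die under $\bar\iota_*$; a spectral sequence on the \emph{target} cannot bound the cohomology of the \emph{source}. More to the point, the proof of Theorem~\ref{prop:HGCAAbar} goes through Lemma~\ref{lem:HGCAp}, whose proof \emph{uses} $H(U^t)=\Q L\oplus\Q T$ as an input (the ``easy exercise'' stated there), so you cannot recover it as an output of the theorem. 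The clean fix is simply to cite that exercise: the $U^t$ defined in the proof of Lemma~\ref{lem:HGCAp} (inside $\HGC_{A'_m,n}$) coincides with yours as a complex, because $\epsilon\omega=\omega^2=0$ in $A'_m$ forces $\delta_{join}$ to vanish on it, leaving only $\delta_{split}$. With that citation in place your argument is complete.
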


As an immediate consequence,
the $L_\infty$-algebra $\HGC_{A_m,n}$ is homotopy abelian for $n-m$ even.
Indeed,  for $n-m$ even there can be at most one $\omega$-hair attached to a vertex by symmetry.
 (In particular,
this means that $U^t$ is one-dimensional and  is spanned by~$L$.)
But then the statement easily follows from Corollary~\ref{cor:HGCAAbar1}, since all possible higher $L_\infty$-operations necessarily produce multiple $\omega$-hairs at some vertex.
Less trivially, the above arguments can also be extended to show that $\HGC_{A_m,n}$ is homotopy abelian for $n$ even and $m$ odd. This 
gives a different proof of the first statement of Corollary~\ref{cor:rht}. 
Similarly, we can also recover the second statement of Corollary~\ref{cor:rht}, which is immediate in case both $m$  and $n$ are odd.
In the remaining case $n$ odd and $m$ even, there is a nontrivial bracket, namely
\[
[L,L] = T,
\]
so that $\HGC_{A_m,n}$ is not homotopy abelian. It is possible to upgrade the map $\Phi$ that we construct below, see Lemma~\ref{lem:Phi}, to an $L_\infty$-map, see the footnote at the end of the proof of Lemma~\ref{lem:Phi}, 
that would allow one to split off $L$ and $T$ - the two classes coming from $S^{n-m-1}$, as an $L_\infty$-direct summand.  

To prepare for the proof of Theorem~\ref{prop:HGCAAbar} let us introduce the non-unital dgca
\[
A'_m = \Q\epsilon \oplus \Q\omega
\]
with $\epsilon$ of degree $0$ and $\omega$ of degree $m$, and products $\epsilon^2=\epsilon$ and $\epsilon\omega=\omega^2=0$.
We consider the hairy graph-complex $\HGC_{A'_m,n}$. Note also that the complexes $\HGC_{A'_m,n}$ and $\HGC_{A_m,n}$ are isomorphic as graded vector spaces, identifying $\epsilon$ and $1$. In fact, from now on we shall tacitly identify the decorations $\epsilon$ and $1$ on hairs of graphs, keeping in mind however that the differentials on $\HGC_{A'_m,n}$ and $\HGC_{A_m,n}$ are different.
Concretely, the differential in $\HGC_{A_m,n}$ has pieces fusing several $1$-decorated hairs with one $\omega$-decorated hair, and these terms are absent in the differential on $\HGC_{A'_m,n}$.
Note that there is again an inclusion $\HGC_{\bar A_m,n}\to \HGC_{A'_m,n}$.
\begin{lemma}\label{lem:HGCAp}
The inclusion map $\Q L \oplus \Q T \oplus \HGC_{\bar A_m,n}\to \HGC_{A'_m,n}$ is a quasi-isomorphism.
Here we understand that $\Q T:=0$ in case $n-m$ is even since then $T=0$.
\end{lemma}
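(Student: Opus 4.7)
The plan is to filter $\HGC_{A'_m,n}$ by the number of $\epsilon$-decorated hairs. For $\Gamma \in \HGC_{A'_m,n}$, let $p(\Gamma)$ denote its number of $\epsilon$-hairs, and set $\mathcal{F}_p = \{\Gamma : p(\Gamma) \leq p\}$. The differential $\delta_{split}$ preserves $p$, while in $\HGC_{A'_m,n}$ the only nonvanishing terms of $\delta_{join}$ come from fusing a subset of two or more $\epsilon$-hairs into a single $\epsilon$-hair---any subset involving an $\omega$-hair picks up a factor of $\epsilon\omega$ or $\omega^2$, both of which vanish in $A'_m$. Consequently $\delta_{join}$ strictly decreases $p$, and the $\mathcal{F}_\bullet$ form an increasing filtration by subcomplexes whose graded pieces $\gr_p := \mathcal{F}_p / \mathcal{F}_{p-1}$ are spanned by graphs with exactly $p$ $\epsilon$-hairs and carry the differential $\delta_{split}$ alone. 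In particular $\gr_0 = \HGC_{\bar A_m,n}$.

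The computation of $H(\HGC_{A'_m,n})$ then reduces to two statements about the graded pieces: first, that $(\gr_p, \delta_{split})$ is acyclic for all $p \geq 2$; and second, that $H(\gr_1, \delta_{split}) = \Q L \oplus \Q T$, with $T$ understood to be zero when $n-m$ is even. For the first, the heuristic is that since $\epsilon$ is idempotent in $A'_m$, additional $\epsilon$-hairs beyond one act as redundant ``marks'' on internal vertices. To realize this one constructs a contracting homotopy $h$ that selects the $\epsilon$-hair of smallest orientation-label, identifies its attaching vertex $v$, and sums over the contractions of the edges incident to $v$; a direct Koszul-sign verification gives $dh + hd = \mathrm{id}$, with appropriate treatment of the cases in which edge contraction would produce a vertex of valence below $3$. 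For the second, a graph in $\gr_1$ has a unique distinguished ``root'' vertex, and a direct inspection shows that $L$ and $T$ are the only closed classes arising in low degree; the already known positive-degree cohomology of $\HGC_{\bar A_m,n}$, combined with degree counting, rules out additional classes.

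To conclude, observe that $L$ and $T$ each contain only one $\epsilon$-hair, so $\delta_{join}$ has nothing to fuse on them: they are closed not only for $\delta_{split}$ but for the full differential of $\HGC_{A'_m,n}$. The filtration spectral sequence therefore collapses at $E_2 = E_\infty$, giving $H(\HGC_{A'_m,n}) = H(\HGC_{\bar A_m,n}) \oplus \Q L \oplus \Q T$. Combining the inclusion $\HGC_{\bar A_m,n} \cong \mathcal{F}_0 \hookrightarrow \HGC_{A'_m,n}$ with the lifts of $L$ and $T$ produces the claimed quasi-isomorphism. The main obstacle is the construction of the contracting homotopy in step one, whose sign bookkeeping and low-valence corner cases are the only part of the argument that is not essentially formal.
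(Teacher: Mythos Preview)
Your spectral-sequence setup is fine, but both of the claimed computations on the $E_1$ page are false, so the argument collapses.

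First, $(\gr_p,\delta_{split})$ is not acyclic for $p\geq 2$. When $n$ is even the line graph
\[
L'=\begin{tikzpicture}[baseline=-.65ex]
\node (v) at (0,0) {$\epsilon$};
\node (w) at (1,0) {$\epsilon$};
\draw (v) edge (w);
\end{tikzpicture}
\]
lies in $\gr_2$; it has no internal vertices, so $\delta_{split}(L')=0$, and since $\delta_{split}$ strictly increases the number of internal vertices it cannot be exact. When $n$ is odd the tripod with two $\epsilon$-hairs and one $\omega$-hair is likewise a nonzero $\delta_{split}$-cycle in $\gr_2$. Your proposed contracting homotopy is already undefined on $L'$ (its $\epsilon$-hairs attach to each other, not to an internal vertex), and on the tripod the ``edge contraction at the attaching vertex'' would have to contract an edge into an external vertex, leaving the category of admissible graphs. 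No repair is possible: the complexes $(\gr_p,\delta_{split})$ genuinely have large cohomology, and the surplus classes cancel only \emph{across} different values of~$p$, via the higher differentials of your spectral sequence, i.e.\ via $\delta_{join}$.

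Second, the claim $H(\gr_1,\delta_{split})=\Q L\oplus\Q T$ is also wrong. For $n$ even the tadpole graph $D'$ (one $\epsilon$-hair attached to a trivalent vertex carrying a loop) is a nontrivial $\delta_{split}$-cycle in $\gr_1$, and there are many further classes in higher loop order. The sentence invoking ``the already known positive-degree cohomology of $\HGC_{\bar A_m,n}$ combined with degree counting'' is not an argument: $\gr_1$ is a different complex from $\HGC_{\bar A_m,n}$, and nothing you have said controls it.

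The paper's proof avoids these problems by keeping the full differential $\delta_{split}+\delta_{join}$ throughout and running an induction on the number of internal vertices. In the inductive step one filters so that the first nontrivial differential is the piece of $\delta_{join}$ that fuses \emph{all} $\epsilon$-hairs into one; this map is injective, and its cokernel is identified (after a further filtration) with a symmetric power of the same complex $U$ with fewer internal vertices, to which the induction hypothesis applies. The essential point is that $\delta_{join}$ is doing real work in killing the extra classes; stripping it off on the associated graded, as your filtration does, makes the problem harder rather than easier.
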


\begin{rem}\label{rem:ABC}
It follows from~\cite[Corollary~1.3]{FTW2} that the complex $\HGC_{A'_m,n}$ computes the rational homotopy groups  of the space 
$\Embbar_\p(\R^m\sqcup\{*\},\R^n)$. On the other hand, by Lemma~\ref{l:ABC},
 $\Embbar_\p(\R^m\sqcup\{*\},\R^n)\simeq \Embbar_\p(\R^m,\R^n)\times S^{n-m-1}$. This explains the quasi-isomorphism of Lemma~\ref{lem:HGCAp}.
\end{rem}

\begin{proof}[Proof of Lemma~\ref{lem:HGCAp}.]
There is a splitting of complexes 
\[
\HGC_{A'_m,n} = \HGC_{\bar A_m,n} \oplus U
\]
with $U$ being the subcomplex spanned by graphs with at least one hair labeled $\epsilon$.
Our goal is to show that $H(U)$ is one- or two-dimensional.
To do this we may follow the proof of \cite[Theorem 1]{KWZ2}.
First note that the differential creates exactly one vertex, hence the homology of $U$ is graded by the number of vertices.
Let $U^t\subset U$ be the subcomplex spanned by trees with exactly one hair labeled $\epsilon$.
It is an easy exercise to check that $H(U^t)=\Q L\oplus \Q T$.
We are going to show by induction on the number of internal vertices that the inclusion $U^t\subset U$ is a quasi-isomorphism.
For zero internal vertices the statement is quickly checked by hand. Suppose we know the statement for less than $k$ internal vertices, and we desire to prove it for $k$ internal vertices.
Consider the splitting
\[
U=U_1\oplus U_{>1} 
\]
where $U_1$ is spanned by diagrams having exactly one $\epsilon$-labeled hair, and $U_{>1}$ being spanned by diagrams having at least two such hairs. The space $U_1$ is
preserved by the differential.
One may set up a bounded spectral sequence such that the lowest page differential is the component $f:U_{>1}\to U_1$ that creates one new internal vertex with an $\epsilon$-hair, connecting all $\epsilon$-hairs to it. Indeed, the complex 
$\HGC_{A'_m,n}$ as well as $U$ is a direct sum of finite complexes as the differential preserves the number of edges
minus the number of internal vertices. This number is sometimes called {\it complexity}
and the number of graphs in $\HGC_{A'_m,n}$ of any given complexity is finite. To obtain the spectral sequence
in question we  filter $U$ (or rather each its complexity summand) by  $\rho$ minus the number of vertices, where
we set  $\rho(U_1)=1$ and $\rho(U_{>0})=0$. The differential $d_0$ of the induced spectral sequence is exactly the map $f$.

The map $f$ is injective. The cokernel $V:=\mathit{coker} f$ consists of $L$ and graphs which become disconnected upon removing the  vertex at the $\epsilon$-hair.
Going further, one may filter $V$ by the number of connected components at that vertex.
On the associated graded the complex obtained is just a symmetric power of the complex $U$.
Hence, invoking the induction hypothesis, we have shown the desired statement.
\end{proof}

Our next goal is to compare the complexes $\HGC_{A'_m,n}$ and $\HGC_{A_m,n}$. Note that the algebra
$A'_m$ can not be obtained as an associated graded of $A_m$. Neither $\HGC_{A'_m,n}$ is an associated graded of
$\HGC_{A_m,n}$. So, we need a more subtle argument to compare their homology. Our strategy will be to split each of the two complexes into three pieces: an acyclic one, a small piece
where the two complexes differ, and the {\it main part} that we show to be the same up to a non-trivial isomorphism.
To this end we will consider the subcomplexes $\HGC_{A'_m,n}'\subset \HGC_{A'_m,n}$ and $\HGC_{A_m,n}'\subset \HGC_{A_m,n}$ spanned by all the diagrams with at least one $\omega$-labeled hair excluding $L$ and $D$. These subcomplexes are our {\it main parts}.

Consider now a hairy graph $\Gamma\in\HGC_{A'_m,n}'$ and let $S$ be some subset of the hairs decorated by $\epsilon$ in $\Gamma$.
Denote by $R_S(\Gamma)$ the sum of all graphs obtained by reconnecting the hairs in $S$ to internal vertices of $\Gamma$, not forming tadpoles\footnote{I.e., a hair cannot be connected to the internal vertex it attaches to.}, pictorially
\[
\begin{tikzpicture}[baseline=-.65ex,decoration={brace,mirror,amplitude=3}]
\node[draw, circle] (v) at (0,.3) {$\Gamma$};
\draw (v) edge +(-.5,-.6) edge +(-.25,-.6)  edge +(0,-.6) edge +(.25,-.6) edge +(.5,-.6);  
\draw [decorate] (-.5,-.5) -- (-.25,-.5) 
node [pos=0.5,anchor=north,yshift=-0.1cm] {$S$}; 
\end{tikzpicture}
\quad
\mapsto
\quad
R_S(\Gamma)=
\sum
\begin{tikzpicture}[baseline=-.65ex]
\node[draw, circle] (v) at (0,.3) {$\Gamma$};
\draw (v) edge[out=-135,in=135, loop] (v) edge[out=-115,in=115, loop] (v)  edge +(0,-.6) edge +(.25,-.6) edge +(.5,-.6);  
\end{tikzpicture}\, .
\]
Note that each graph in the sum has the same orientation set (of vertices, edges, and $\omega$-labels). So, we keep the same order 
of their orientation sets and the same orientation of edges. With this convention, no signs appear in the sum above.

Now consider the map 
\[
\Phi : \HGC_{A'_m,n}' \to \HGC_{A_m,n}'
\]
which is defined combinatorially by the formula
\[
\Phi(\Gamma) = 
(-1)^{\#\epsilon}
\sum_S R_S(\Gamma).
\]
where $\Gamma\in \HGC_{A'_m,n}'$ is a graph with $\#\epsilon$ many $\epsilon$-decorated hairs.

\begin{lemma}\label{lem:Phi}
The map $\Phi:\HGC_{A'_m,n}' \to \HGC_{A_m,n}'$ is an isomorphism of complexes.
\end{lemma}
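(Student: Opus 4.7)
The plan is to prove Lemma~\ref{lem:Phi} in two steps: first establish that $\Phi$ is an isomorphism of graded vector spaces, then verify that it is a chain map.

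For the vector space isomorphism, I filter both complexes by the number $k$ of $\epsilon$-decorated (equivalently $1$-decorated) hairs, letting $F_k$ be the subspace spanned by graphs with at most $k$ such hairs. Since $R_S(\Gamma)$ has exactly $|S|$ fewer $\epsilon$-hairs than $\Gamma$, the map $\Phi$ respects this filtration, and on the associated graded it acts on $F_k/F_{k-1}$ by the nonzero scalar $(-1)^k$. Hence $\Phi$ is bijective. The degree is preserved because reconnecting an $\epsilon$-hair exchanges one edge (the original hair) for another (a new internal edge), leaving $\#E$, $\#V$, and $\#H$ (the count of $\omega$-hairs) unchanged.

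For the chain map property, I decompose $d_{A_m,n}=\delta_{split}+\delta_{join}^{A'_m}+\delta_{extra}$ and $d_{A'_m,n}=\delta_{split}+\delta_{join}^{A'_m}$, where $\delta_{extra}$ is the component of the join differential that is present in $\HGC_{A_m,n}$ but absent in $\HGC_{A'_m,n}$: it takes $k\geq 1$ many $1$-hairs together with one $\omega$-hair, fuses them at a new internal vertex, and produces a single $\omega$-hair (this piece vanishes in $\HGC_{A'_m,n}$ since $\epsilon\omega=0$). The identity $d_{A_m,n}\Phi=\Phi d_{A'_m,n}$ is then equivalent to
\[
[\delta_{split},\Phi]+[\delta_{join}^{A'_m},\Phi]=-\delta_{extra}\Phi.
\]
Writing $\Phi=(-1)^{\#\epsilon}\prod_h(1+r_h)$ with the product over $\epsilon$-hairs $h$ and $r_h$ the single-hair reconnection operation, I reduce the identity to analyzing, for each $\epsilon$-hair $h$, how $r_h$ fails to commute with $\delta_{split}$ and $\delta_{join}^{A'_m}$. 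The defect of $[\delta_{split},r_h]$ arises from configurations where the splitting vertex coincides with the vertex $v_h$ at which $h$ was attached or with the target of the reconnected edge; the defect of $[\delta_{join}^{A'_m},r_h]$ arises from joins of other hairs that interact with the newly created edge. These defects account precisely for $\delta_{extra}\Phi$: the new internal vertex created by $\delta_{extra}$ matches a split-off vertex on a reconnected edge, and the $1$-hairs fused into an $\omega$-hair correspond to hairs that were reconnected by the $r_h$'s.

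The main obstacle will be the sign bookkeeping and the careful matching of boundary terms. In particular, one must track how the tadpole exclusion in the definition of $R_S$ interacts with splitting: reconnecting first and then splitting $v_h$ cannot produce an edge to the newly created vertex, whereas splitting first allows the hair (and hence its reconnection) to land on either half, and this asymmetry is exactly compensated by the $\delta_{extra}$ contributions that create a fresh internal vertex. Once this combinatorial case analysis is complete, matching graphs term-by-term yields the desired identity and hence the isomorphism of complexes.
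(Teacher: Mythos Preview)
Your outline shares the paper's setup: bijectivity via a triangularity argument (the paper filters by loop order rather than by number of $\epsilon$-hairs, but either works) and the chain-map identity via commutators with the single-hair reconnection. However, the step ``reduce the identity to analyzing, for each $\epsilon$-hair $h$, how $r_h$ fails to commute'' is not a valid simplification as stated: the operators $r_h$ are indexed by the hairs of a specific graph and are not globally defined linear maps on the complex, so one cannot simply commute them past a differential that changes the hair set, nor does $[\delta,\prod_h(1+r_h)]$ linearize into a sum of single-hair defects (the defect $[\delta,r_h]$ need not commute with the remaining $r_{h'}$). The paper handles this correctly by packaging all reconnections into the single linear operator $s$ (reconnect one $\epsilon$-hair, summed over all choices), writing $\Phi=\exp(s)\circ I_\epsilon$, and computing $\Phi\, d'\,\Phi^{-1}=\exp(\ad_s)(\bar d')$ with $\bar d'=I_\epsilon d' I_\epsilon$.

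The actual content of the lemma then lies in this conjugation calculation: one temporarily enlarges the complex to allow uni- and bivalent internal vertices, decomposes $d'=d'_1-B_\emptyset+d'_\epsilon+d'_\omega$, computes each $\tfrac{1}{j!}\ad_s^j$ on the pieces in terms of explicit operations $A_K$, $B_J$, $C_J$ indexed by subsets of the $\epsilon$-hairs, and collapses the resulting double sums using the elementary identity $\sum_{J\cap K=\emptyset}(-1)^{|K|}X_{J\cup K}=X_\emptyset$. Your phrases ``these defects account precisely for $\delta_{extra}\Phi$'' and ``once this combinatorial case analysis is complete'' stand in for precisely this computation; without it the proof is incomplete. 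In particular, multi-hair interactions (not just single-hair defects) and the tadpole exclusion must all be tracked simultaneously, which is why the exponential/conjugation framework, together with the above cancellation identity, is the right organization.
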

\begin{proof}
It is clear that the map is an isomorphism since $\Phi(\Gamma)=\pm \Gamma+(\cdots)$, with $(\cdots)$ representing terms of loop orders higher than that of $\Gamma$.
We next show that $\Phi$ commutes with the differentials. The only graph in $\HGC_{A'_m,n}' $
that does not have internal vertices is $
L_\omega=
\begin{tikzpicture}[baseline=-.65ex]
\node (v) at (0,0) {$\omega$};
\node (w) at (1,0) {$\omega$};
\draw (v) edge (w);
\end{tikzpicture}$. Since $d'(L_\omega)=d(L_\omega)=0$ and $\Phi(L_\omega)=L_\omega$,
this graph can be ignored and from now on we only consider graphs that have internal vertices.
Let us first reformulate the problem. We identify $\HGC_{A'_m,n}'$ and $\HGC_{A_m,n}'$ as graded vector spaces, and denote the differential of $\HGC_{A'_m,n}'$ by $d'$ and that of $\HGC_{A_m,n}'$ by $d$.
Let $s: \HGC_{A'_m,n}' \to \HGC_{A'_m,n}'$ be the map of graded vector spaces that reconnects one hair $h$ labeled $\epsilon$ to an internal vertex (but not the one from which $h$ is growing).
\[
s(\Gamma) = 
\sum
\begin{tikzpicture}[baseline=-.65ex]
\node[draw, circle] (v) at (0,.3) {$\Gamma$};
\draw (v) edge[out=-135,in=135, loop] (v)  edge +(0,-.6) edge +(.25,-.6) edge +(.5,-.6);  
\end{tikzpicture}
\]
Then we can write $\Phi=\exp(s)\circ I_\epsilon$, where $I_\epsilon(\Gamma)=(-1)^{\#\epsilon}\Gamma$.
We desire to show that 
$\Phi\circ d' = d \circ \Phi$, or equivalently
\[
\exp(\ad_s) (\underbrace{I_\epsilon d' I_\epsilon}_{=:\bar d'}) = \sum_{j=0}^\infty 
\frac{1}{j!}\ad_s^j \bar d' \stackrel{?}{=} d,
\]
where $\bar d'=I_\epsilon d' I_\epsilon$ and $\ad_s=[s,-]$ is the commutator as usual.

Furthermore, let us split the differential $d'$, and similarly $\bar d'$ in several pieces.
To this end it is most convenient to temporarily enlarge our complex $\HGC_{A'_m,n}'$ in that we also allow graphs with univalent and bivalent internal vertices.
Then we split
\[
d' = d'_1 - B_\emptyset + d'_\epsilon + d'_\omega
\]
into the following four terms.
\begin{itemize}
	\item $d'_1$ splits a vertex into two vertices, distributing the incoming edges in all possible ways, including such that create uni- or bivalent internal vertices.
	\item $B_\emptyset$ attaches a new univalent vertex to the graph. The sign is such that it precisely cancels those terms from $d'_1$ that create univalent internal vertices.
	\[
	\begin{tikzpicture}[baseline=-.65ex,decoration={brace,mirror,amplitude=3}]
	\node[draw, circle] (v) at (0,.3) {$\Gamma$};
	\draw (v) edge +(-.5,-.6) edge +(-.25,-.6)  edge +(0,-.6) edge +(.25,-.6) edge +(.5,-.6);  
	\end{tikzpicture}
	\mapsto
	\begin{tikzpicture}[baseline=-.65ex,decoration={brace,mirror,amplitude=3}]
	\node[draw, circle] (v) at (0,.3) {$\Gamma$};
	\draw (v) edge +(-.5,-.6) edge +(-.25,-.6)  edge +(0,-.6) edge +(.25,-.6) edge +(.5,-.6);  
	\node[int] (w) at (0.25,1) {};
	\draw (v) edge (w);
	\end{tikzpicture}
	\] 
	\item $d'_\epsilon$ creates a new internal vertex with an $\epsilon$-decorated hair and attaches a non-empty subset of the $\epsilon$-decorated hairs to it,
	\[
	d'_\epsilon \Gamma = \sum_{K \atop |K|\geq 1} A_K(\Gamma),
	\]
	\[
	A_K(\Gamma) = 
	\begin{tikzpicture}[baseline=-.65ex]
\node[draw, circle] (v) at (0,.3) {$\Gamma$};
\node[int] (w) at (-0.5,-0.5) {};
\draw (v) edge (w) edge[bend right] (w) 
(v)  edge +(0,-.6) edge +(.25,-.6) edge +(.5,-.6) (w) edge +(0,-.5);  
\draw[dotted] (-0.2,-0.4) -- (-1,0) node[above] {$K$};
\end{tikzpicture}
	\]
	\item $d'_\omega$ creates a bivalent internal vertex on an  $\omega$-decorated hair,
	\[
	d'_\omega =  C_\emptyset(\Gamma).
	\]
	\[
	C_\emptyset(\Gamma) = \sum
\begin{tikzpicture}[baseline=-.65ex,decoration={brace,mirror,amplitude=3}]
\node[draw, circle] (v) at (0,.3) {$\Gamma$};
\node[int] (w) at (.5,-.5) {};
\draw (v) edge +(-.5,-.6) edge +(-.25,-.6)  edge +(0,-.6) edge +(.25,-.6) edge (w) (w) ++(0,-.3) node[below] {$\omega$} edge (w);  
\end{tikzpicture}
	\]
	Each operation above produces a sum of graphs $\Gamma'$ that have one more vertex and one more edge
	than the graph~$\Gamma$. So, we put these two new elements as the first and second elements
	of the orientation set of $\Gamma'$ keeping without change the rest. The new edge in $\Gamma'$ is always
	oriented towards the new vertex. In the case of $d'_\omega$ (as well as in the case of $d'_\epsilon$) the new edge is considered to be 
	the hair one. 
	
	Note that the $|K|=1$-term of $d'_\epsilon$ and $d'_\omega$ together cancel all terms in the total differential $d'$ that possibly create a graph with a bivalent internal vertex.

\item Finally we note that $I_\epsilon d'_1I_\epsilon = d_1'$, $I_\epsilon B_\emptyset I_\epsilon=B_\emptyset$,  and $I_\epsilon d'_\omega I_\epsilon = d_1'$. Denoting 
$\bar d'_\omega := (I_\epsilon d'_\omega I_\epsilon)$
we furthermore have 
\begin{align*}
\bar d'_\epsilon\Gamma &= \sum_{K \atop |K|\geq 1} (-1)^{|K|-1} A_K(\Gamma) .
\end{align*}

\end{itemize}
One quickly checks that $[s,d_1']=0$. Note that in $s(d_1'(\Gamma))$ the part that comes from connecting an $\epsilon$-hair $h$ to a new vertex created by $d_1'$ by blowing up the vertex to which $h$ is attached, is zero. Indeed, when
$n$ is even each such graph  is zero as it contains a double edge. When $n$ is odd,  the sum can be seen as a sum of 
pairs of  identical graphs  
with an edge, former $h$,  appearing with the opposite orientation. Thus,  two such graphs cancel each other. 
Furthermore,
\beq{eq:B_J}
\frac 1 {j!} (\ad_s^j B_\emptyset)(\Gamma) = \sum_{J\atop |J|=j} B_J(\Gamma), 
\eeq
where the sum is over subsets $J$ of the set of $\epsilon$-labeled hairs and
\[
B_J(\Gamma) =\sum
\begin{tikzpicture}[baseline=-.65ex]
\node[draw, circle] (v) at (0,.3) {$\Gamma$};
\node[int] (w) at (-0.8,0.4) {};
\draw (w) edge (v);
\draw (v) edge[out=-135,in=-35] (w) edge[out=-115,in=-90] (w)  edge +(0,-.6) edge +(.25,-.6) edge +(.5,-.6);  
\draw[dotted] (-0.5,0.2) -- (-1.1,0) node[below] {$J$};
\end{tikzpicture}
\]
is obtained by connecting the hairs $J$ to a new vertex and that furthermore to an arbitrary existing vertex of $\Gamma$. Indeed, if we denote by $B_j(\Gamma)$ the right-hand side of~\eqref{eq:B_J},
one has
\beq{eq:B_j}
s(B_j(\Gamma))=B_j(s(\Gamma))+(j+1)B_{j+1}(\Gamma),
\eeq
i.e., $[s,B_j]=(j+1)B_{j+1}$,
which applying induction proves~\eqref{eq:B_J}.
Next, 
\[
\frac 1 {j!} (\ad_s^j \bar d'_\epsilon)(\Gamma) =
\sum_{\substack{ J,K \\ J\cap K=\emptyset \\ |J|=j,|K|\geq 1 }}
(-1)^{|K|-1} A_{J\cup K}(\Gamma)
+
\sum_{\substack{J,K \\ J\cap K=\emptyset \\ |J|=j-1,|K|\geq 1}}
(-1)^{|K|-1} B_{J\cup K}(\Gamma).
\]
To prove it, denote by $A_{j;>0}(\Gamma)$ the first sum and by $B_{j-1;>0}(\Gamma)$ the second one. The above is proved  by checking that the operations $A_{j;>0}$ and $B_{j-1;>0}$ satisfy
\[
[s,A_{j;>0}]  = (j+1)A_{j+1;>0} + B_{j;>0}\,\,\, ,\hspace{15pt}
\hspace{15pt}
[s,B_{j-1;>0}]= j B_{j;>0}.
\]
Finally, 
\[
\frac 1 {j!} (\ad_s^j  d'_\omega)(\Gamma) =
\sum_{\substack{ J \\ |J|=j  }}
 C_{J}(\Gamma)\ ,
\]
where the sum is again over subsets $J$ of the $\epsilon$-decorated hairs, and $C_J(\Gamma)$ is obtained by connecting the hairs in $J$ to one new vertex attached to an $\omega$-decorated hair. 
\[
C_J(\Gamma)=\sum
\begin{tikzpicture}[baseline=-.65ex,decoration={brace,mirror,amplitude=3}]
\node[draw, circle] (v) at (0,.3) {$\Gamma$};
\node[int] (w) at (.5,-.5) {};
\draw (v) edge +(-.5,-.6) 
edge[out=-80] (w) edge[out=-110] (w) edge (w) 
(w) ++(0,-.3) node[below] {$\omega$} edge (w);  
\draw[dotted] (0.2,-0.2) -- (-0.2,-0.5) node[below] {$J$};
\end{tikzpicture}
\]
Now, putting everything together we get (with the sums being over subsets of the $\epsilon$-decorated hairs)
\begin{multline*}
(\Phi d'\Phi^{-1})(\Gamma) = \sum_{j=0}^\infty\frac 1{j!} (\ad_s^j \bar d') (\Gamma)=\\
d_1'(\Gamma)
-\sum_{J\atop |J|\geq 0} B_J(\Gamma)
+
\sum_{\substack{ J,K \\ J\cap K=\emptyset \\ |J|\geq 0,|K|\geq 1 }}
(-1)^{|K|-1} A_{J\cup K}(\Gamma)
+
\sum_{\substack{J,K \\ J\cap K=\emptyset \\ |J|\geq 0,|K|\geq 1}}
(-1)^{|K|-1} B_{J\cup K}(\Gamma)
+
\sum_{\substack{ J \\ |J|\geq 0  }}
C_{J}(\Gamma) 
\\
=d_1' (\Gamma)
-
\sum_{\substack{J,K \\ J\cap K=\emptyset \\ |J|,|K|\geq 0}}
(-1)^{|K|} B_{J\cup K}(\Gamma)
-
\sum_{\substack{ J,K \\ J\cap K=\emptyset \\ |J|\geq 0,|K|\geq 1 }}
(-1)^{|K|} A_{J\cup K}(\Gamma)
+
\sum_{\substack{ J \\ |J|\geq 0  }}
C_{J}(\Gamma) \, .
\end{multline*}
Now we use (twice) that for any function $J\mapsto X_J$ on subsets as above
\[
\sum_{J\cap K=\emptyset} (-1)^{|K|} X_{J\cup K} = X_\emptyset. 
\]
This simplifies the above expression to 
\[
d_1' (\Gamma)
-
B_\emptyset(\Gamma)
+
\sum_{\substack{ J \\ |J|\geq 1 }}
A_{J}(\Gamma)
+
\sum_{\substack{ J \\ |J|\geq 0  }}
C_{J}(\Gamma) 
= d(\Gamma).
\]
This is precisely $d$, hence the Lemma is proven.\footnote{The same construction applied to disconnected graphs, interpreted as the Chevalley complex, in fact can be used to construct an $L_\infty$-isomorphism, not just one of complexes.}
\end{proof}
Let us finish the proof of Theorem~\ref{prop:HGCAAbar}.
\begin{proof}[Proof of Theorem~\ref{prop:HGCAAbar}.]

Now let $\HGC_{\bar A_m,n}'\subset\HGC_{\bar A_m,n}$ be the subcomplex spanned by all the diagrams excluding $D$. 
Note that we have a natural inclusion $\HGC_{\bar A_m,n}'\to \HGC_{A'_m,n}'$, fitting into the commutative diagram
\[
\begin{tikzcd}
\HGC_{\bar A_m,n}' \ar{r} \ar{dr} &  \HGC_{A'_m,n}' \ar{d}{\Phi}[swap]{\cong} \\
 & \HGC_{A_m,n}' 
\end{tikzcd}\, .
\]
From Lemma \ref{lem:HGCAp} and its proof we see that
\[
H(\HGC_{A_m,n}') \cong H(\HGC_{A'_m,n}')\cong H(\HGC_{\bar A_m,n}')\oplus \Q T,
\]
again using the convention that $\Q T=0$ if $T=0$.

To show Theorem~\ref{prop:HGCAAbar} it just remains to compare the homology of $\HGC_{A_m,n}'$ and $\HGC_{A_m,n}$. The complex $\HGC_{A_m,n}$ is a direct sum of three complexes
\beq{eq:3parts}
\HGC_{A_m,n} = W_0 \oplus \HGC_{A_m,n}'\oplus \bigl(\Q L\oplus \Q D),
\eeq
with $W_0$ spanned by graphs with zero $\omega$-vertices.
It is shown in \cite[Theorem~1]{KWZ2} (see also Lemma \ref{lem:HGCAp}) that $H(W_0)=0$.
Now the last summand in~\eqref{eq:3parts} has a non-trivial differential (sending $L$ to $D$) iff $n$ is even, i.e. iff $D\neq 0$.
 Combining the above observations, depending on the parity of $n$ and $n-m$ we arrive at Theorem~\ref{prop:HGCAAbar}.
\end{proof}

\section{Codimension $n-m\leq 2$}\label{s4}

\subsection{Codimension one}\label{ss:41}

\begin{prop}\label{p:codim1}
For $n\geq 2$, one has an equivalence
\[
\Embbar(S^{n-1},\R^n)\simeq
\begin{cases}
S^0\times\Embbar_\partial(\R^{n-1},\R^n),& n=3 \text{ or }7;\\
\Embbar_\partial(\R^{n-1},\R^n),&\text{  otherwise.}
\end{cases}
\]
\end{prop}

\begin{proof}
It is easy to see that the statement of Proposition~\ref{prop:hofib1} holds for $n-m=1$. The crucial fact is that the complement of any long knot $\R^{n-1}\hookrightarrow\R^n$ is homotopy equivalent 
to $S^0$, which follows from the generalized Schoenflies theorem~\cite{Mazur}. Any codimension one long knot is
regularly homotopic to the trivial one \cite[Theorem~2]{Kaiser}, which means that the induced map
\[
D_*\colon\pi_0\Emb_\partial(\R^{n-1},\R^n)\to \pi_0\Omega^{n-1}V_{n-1}(\R^n)=\pi_{n-1}\SO(n)
\]
is zero. On the other hand, the map
\[
h_*\colon\pi_0S^0\to\pi_{n-1}\SO(n)
\]
is trivial iff $S^{n-1}$ can be eversed in $\R^{n}$, i.e., iff $n=3$ or~7 \cite{Smale,Kaiser}. The result follows.
\end{proof}

It is known that $\pi_0\Emb_\partial(\R^{n-1},\R^n)=\theta_n$, $n\neq 4$, -- the group of $n$-spheres,
see~\cite[Section~5]{Budney2} and references within. (For $n=4$, the question whether the space is connected is
equivalent to the smooth Schoenflies problem -- does a smoothly embedded $S^3$ in $\R^4$ always bound the standard $D^4$, which is still open in this dimension.) By the same argument as in the proof of Theorem~\ref{thm:emb_long},
one gets
\[
\Embbar(S^{n-1},\R^n)\simeq\hofiber\left(S^0\to B\Embbar_\partial(\R^{n-1},\R^n)\times \pi_{n-1}\SO(n)\right),\,\, n\neq 4,
\]
since $\Omega^{n-1}\SO(n)\sslash\Emb_\partial(\R^{n-1},\R^n)\simeq  B\Embbar_\partial(\R^{n-1},\R^n)
\times \pi_{n-1}\SO(n)$. We conclude that the main statement of Theorem~\ref{thm:emb_long} 
holds for $n=m+1$ iff $n=3$ or~7.

\subsection{Codimension two}\label{ss42}
The main statement of Theorem~\ref{thm:emb_long}  always fails in codimension $n-m=2$. There are two reasons for this.
Firstly, for $n\geq 3$, neither $\Emb_\partial(\R^{n-2},\R^n)$, nor $\Embbar_\partial(\R^{n-2},\R^n)$ are loop spaces
\cite[Proposition~5.11]{Budney2}\footnote{It was pointed out to us by R.~Budney that
the proof of this proposition has a little mistake that can easily be corrected. Contrary to what is said, there are codimension two long knots $f$ with exterior
$C_f\nsimeq S^1$ and $\pi_1 C_f=\Z$. Such knots are studied in~\cite{HilKea}. However, only 
the trivial non-parametrized knot $S^{n-2}\hookrightarrow S^n$ is invertible according to~\cite{Sos,Maeda1,Maeda2}.}.
Secondly, the complement $C_f$ of a long knot $f\colon\R^{n-2}\hookrightarrow\R^n$ almost always 
is not weakly equivalent to $S^1$ \cite{Sos}. Thus, Proposition~\ref{prop:hofib1} does not hold for $n=m+2$. 
Indeed, the space $\Emb_\partial(\R^{n-2},\R^n\setminus\{0\})$ is weakly homotopy
equivalent to $\Emb_\partial(\R^{n-2}\sqcup\{*\},\R^n)$ (by the same argument as in Lemma~\ref{l:ABC}),
but the latter space is a possibly non-trivial fiber bundle over $\Emb_\partial(\R^{n-2},\R^n)$ with fiber~$C_f$.

\subsection{Goodwillie-Weiss calculus and graph-complexes}\label{ss:43}
Given a (formally) immersed manifold $M$ in $\R^n$, one can consider the functor $\Embbar(-,\R^n)$
and its objectwise rationalization $\Embbar(-,\R^n)^\Q$ on the poset of open sets of $M$.
Goodwillie-Weiss calculus~\cite{GW,Weiss} produces Taylor towers of approximations to
these two functors:
\beq{eq:Taylor}
\Embbar(M,\R^n) \to T_\infty \Embbar(M,\R^n) \to T_\infty \Embbar(M,\R^n)^\Q.
\eeq
In case codimension is $\geq 3$, the first map is an equivalence \cite{GK,GKW}, and the second
map is finite-to-one on $\pi_0$ and a  rational equivalence on connected components \cite[Section~4.2]{FTW}. 
Even when the codimension condition is not satisfied, it can still be interesting to know what is the right-hand 
side space of~\eqref{eq:Taylor} as it can provide interesting invariants or more generally  cohomology classes
of the embedding space in question. In~\cite[Theorem~1.1]{FTW2}, B.~Fresse and the authors computed
$T_\infty \Embbar(M,\R^n)^\Q$ expressing it as the simplicial set of Maurer-Cartan elements of associated
$L_\infty$-algebra of hairy graph-complexes, provided $M$ is immersible or formally immersible in $\R^{n-2}$. In
particular, one has
\begin{align}
T_\infty \Embbar_\partial(\R^m,\R^n)^\Q&\simeq \MC_\bullet(\HGC_{\bar A_m,n}),
\,\, n-m\geq 2; \label{eq:MC1} \\
T_\infty \Embbar(S^m,\R^n)^\Q&\simeq \MC_\bullet (\HGC_{ A_m,n}),\,\,
 n-m\geq 3 \text{ or } n=m+2=3, 5\text{ or } 9. \label{eq:MC2}
\end{align}
(One needs $S^{n-2}$ to be parallelizable to be formally immersible in $\R^{n-2}$, which is only true for
$S^1$, $S^3$, and $S^7$ \cite{Kaiser,Smale}.)  When the codimension $n-m=2$, the hairy graph-complexes are no more of finite type and their
elements are infinite series of graphs. The graph-complexes in question are considered as completed pronilpotent $L_\infty$-algebras, the completion being taken with
respect to the complexity filtration, see Proof of Lemma~\ref{lem:HGCAp}. 
 Since the $L_\infty$-structure of $\HGC_{\bar A_m,n}$ is abelian, 
each space~\eqref{eq:MC1} is a product of Eilenberg-MacLane spaces
\[
T_\infty \Embbar_\partial(\R^m,\R^n)^\Q\simeq \prod_{i=0}^\infty K\left(H_i(\HGC_{\bar A_m,n}),\,i\,\right), \,\, n-m\geq 2.
\]
In particular this means that for $n-m\geq 2$,
\[
\pi_0 T_\infty \Embbar_\partial(\R^m,\R^n)^\Q=\MC(\HGC_{ \bar A_m,n})/{\sim} = H_0(\HGC_{ \bar A_m,n}).
\]

The statements of Theorem~\ref{prop:HGCAAbar} and Corollary~\ref{cor:HGCAAbar1}  hold for any $m$ and $n$, in particular they
are also true in codimension $n-m=2$. The inclusion $U^t\oplus \HGC_{\bar A_m,n}\subset \HGC_{A_m,n}$ is a quasi-isomorphism 
of filtered (by complexity) completed $L_\infty$-algebras, which induces a quasi-isomorphism of associated graded complexes. By the generalized
 Goldman-Millson theorem~\cite{DolRog}, this inclusion induces an  equivalence  of simplicial sets
 \[
 \MC_\bullet(U^t\oplus \HGC_{\bar A_m,n})\simeq \MC_\bullet\left( \HGC_{A_m,n}\right).
 \]
 As a consequence, in the range of equivalence~\eqref{eq:MC2}, one has
\begin{multline*}
\pi_0 T_\infty \Embbar(S^m,\R^n)^\Q =\MC(\HGC_{ A_m,n})/{\sim} = \MC(U^t\oplus \HGC_{\bar A_m,n})/{\sim}= \\
\MC(\HGC_{ \bar A_m,n})/{\sim} = H_0(\HGC_{ \bar A_m,n})
=H_0(\HGC_{ A_m,n}).
\end{multline*}
Indeed, for $n-m\geq 3$ the third and last equalities are true by degree reasons, see \cite[Corollary~5.2]{FTW2}, while for $n-m=2$ and $n$ odd, $U^t$ is
a direct summand one-dimensional $L_\infty$-subalgebra of $U^t\oplus \HGC_{\bar A_m,n}$ (spanned by $L$ of degree~$1$). Thus, one has
\[
T_\infty \Embbar(S^{n-2},\R^n)^\Q\simeq K(\Q,1)\times T_\infty\Embbar_\partial(\R^{n-2},\R^n)^\Q,\,\,
 n=3,\, 5\text{ or } 9.
 \]

One does not know yet how to express algebraically $T_\infty \Embbar(S^m,\R^n)^\Q$ beyond the range of~\eqref{eq:MC2}.
 On the other hand,  the equivalence~\eqref{eq:MC1} had been proved earlier by B.~Fresse
and the authors in~\cite[Theorem~1]{FTW}. In~\cite[Corollaries~5 and~8]{FTW}, we  similarly expressed $T_\infty \Embbar_\partial(\R^{n-1},\R^n)^\Q$
and $T_\infty \Embbar_\partial(\R^{n},\R^n)^\Q$ using the same, up to a degree one shift, graph-complex $\GC_n$,
the usual Kontsevich graph-complex of bald (no hairs) graphs endowed in both cases with the abelian $L_\infty$-structure.
 The reason we get a smaller
complex  for $n-m=1$ is the relative non-formality of the little discs operads in codimension one~\cite{TW}. 
As a consequence, we obtain \cite[equation~(14)]{FTW}
\[
T_\infty \Embbar_\partial(\R^{n},\R^n)^\Q\simeq \Omega T_\infty \Embbar_\partial(\R^{n-1},\R^n)^\Q.
\]
Note that one also has
\[
\Diff_\partial(D^n)\simeq\Omega\Emb_\partial(\R^{n-1},\R^n),
\]
see \cite[Appendix, Section~5, Proposition~5]{Cerf}, \cite[Proposition~5.3]{Budney2}, which  implies
\[
\Embbar_\partial(\R^n,\R^n):= \hofiber\left(\Diff_\partial(D^n) \to \Omega^n\SO(n)\right)\simeq \Omega\Embbar_\partial(\R^{n-1},\R^n).
\]
 This means that,
even though the Goodwillie-Weiss calculus is only applicable in codimensions $\geq 3$, it can still detect at least
rationally the codimension one versus codimension zero rigidity of embeddings.

\subsection*{Acknowledgment} The authors thank R.~Budney and  B.~Fresse  for communication. 

\begin{bibdiv}
\begin{biblist}
%

\bib{Turchin2}{article}{
author={Arone, G.},
author={Turchin, V.},
     TITLE = {On the rational homology of high-dimensional analogues of
              spaces of long knots},
   JOURNAL = {Geom. Topol.},
    VOLUME = {18},
      YEAR = {2014},
    NUMBER = {3},
     PAGES = {1261--1322},
}

\bib{Turchin3}{article}{
author={Arone, G.},
author={Turchin, V.},
title= {Graph-complexes computing the rational homotopy of high dimensional
  analogues of spaces of long knots},
journal={Ann. Inst. Fourier},
volume={65},
year={2015},
number={1},
pages={1--62},
}

%

%
%

\bib{WBdB2}{article}{
AUTHOR = {Boavida de Brito, Pedro},
    author={Weiss, Michael},
    title={Spaces of smooth embeddings and configuration categories},
    year={2018},
    journal={J. Topol.},
    volume={11},
    number={1},
    pages={65--143},
}



\bib{Budney1}{article}{
author={Budney, Ryan},
title={Little cubes and long knots},
year={2007},
journal={Topology},
volume={46},
number={1},
pages={1--27},
}

\bib{Budney2}{article}{
author={Budney, Ryan},
title={A family of embedding spaces},
conference={
title={Groups, homotopy and configuration spaces},
address={Tokyo},
date={2005}
},
  book={
      series={Geom. \&  Topol. Monogr.},
      volume={13},
      publisher={Geom. Topol. Publ.},
      place={Coventry},
      date={2008},
      pages={41--83},
   },
   }

\bib{Budney3}{article}{
author={Budney, Ryan},
title={An operad for splicing},
journal={J. Topol.},
year={2012},
volume={5},
number={4},
pages={945--976},
}

\bib{BudneyCohen}{article}{
   author={Budney, Ryan},
   author={Cohen, Fred},
   title={On the homology of the space of knots},
   journal={Geom. Topol.},
   volume={13},
   date={2009},
   number={1},
   pages={99--139},
   issn={1465-3060},
   review={\MR{2469515}},
   doi={10.2140/gt.2009.13.99},
}

%
%
%

%

\bib{Cerf}{book}{
author={Cerf, Jean},
title={Sur les diff\'eomorphismes de la sph\`ere de dimension trois ($\Gamma_4=0$)},
number={53},
series={Lecture Notes in Mathematics},
publisher={Springer-Verlag},
address={Berlin-New York},
year= {1968},
note={(French) xii+133 pp.},
}

%
%
%
%
%
\bib{DolRog}{article}{
author={Dolgushev, Vasily},
author={Rogers, Christopher L.},
    TITLE = {A version of the {G}oldman-{M}illson theorem for filtered
             {$L_{\infty}$}-algebras},
  JOURNAL = {J. Algebra},
   VOLUME = {430},
     YEAR = {2015},
    PAGES = {260--302},
}

%
%

\bib{DucT}{article}{
author={J. Ducoulombier and V. Turchin},
year={2017},
title={Delooping the functor  calculus tower},
note={arXiv:1708.02203},
}

\bib{FTW}{article}{
author={Fresse, B.},
author={Turchin, V.},
author={Willwacher, T.},
title={The rational homotopy of mapping spaces of $E_n$ operads},
note={arXiv:1703.06123},
year={2017},
}

\bib{FTW2}{article}{
author={Fresse, B.},
author={Turchin, V.},
author={Willwacher, T.},
title={On the rational homotopy type of embedding spaces of manifolds in $\R^n$},
note={arXiv:2008.08146},
year={2020},
}

%
%
%
%

%

\bib{GK}{article}{
author= {Goodwillie, Thomas G.},
author= {Klein, John R.},
title={Multiple disjunction for spaces of smooth embeddings},
journal={J. Topol.},
volume={8},
year={2015},
number={3}, 
pages={651-674},
}

\bib{GKW}{article}{
author={Goodwillie, Thomas G.},
author= {Klein, John R.},
author={Weiss, Michael},
title={Spaces of smooth embeddings, disjunction and surgery},
book={
series={Ann. of Math. Studies},
note={Surveys on Surgery Theory: Volume 2. Papers Dedicated to C.T.C. Wall}, 
volume={145},
}
year={2001}, 
pages={221-284},
}

\bib{GW}{article}{
    AUTHOR = {Goodwillie, Thomas G.},
    author={Weiss, Michael},
     TITLE = {Embeddings from the point of view of immersion theory. {II}},
   JOURNAL = {Geom. Topol.},
    VOLUME = {3},
      YEAR = {1999},
     PAGES = {103--118 (electronic)},
      ISSN = {1465-3060},
}

\bib{Haefliger1}{article}{
author={Haefliger, Andr\'e},
title={Knotted $(4k-1)$-spheres in $6k$-space},
journal={Ann.  Math.},
volume={75},
number={2},
year={1965},
pages={452-466},
}

\bib{Haefliger2}{article}{
author={Haefliger, Andr\'e},
title={Differentiable embeddings of $S^n$ in $S^{n+q}$ for $q>2$},
journal={Ann. Math.},
volume={83},
number={3},
year={1966},
pages={402--436},
}
%
%

 

 
 \bib{HilKea}{article}{
 author={Hillman, J. A.},
 author={Kearton, C.},
 title={Simple 4-knots},
 journal={J. Knot Theory Ramifications },
 volume={7},
 year={1998},
 number={7},
 pages={907--923},
 }

%

\bib{Hirsch}{article}{
author={Hirsch, Morris},
title={Immersions of manifolds},
journal={Transactions A.M.S.},
volume={93},
year={1959},
pages={242--276},
}

%
%
%

\bib{Kaiser}{article}{
author={Kaiser, U.},
title={Immersions in codimension~1 up to regular homotopy},
journal={Arch. Math. (Basel)},
year={1988},
volume={51},
number={4},
pages={371--377},
}

\bib{KhorWillw}{article}{
author={Khoroshkin, A.},
author={Willwacher, T.},
title={Real models for the framed little n-disks operads},
note={arXiv:1705.08108},
year={2017},
}


\bib{KWZ2}{article}{
author={Khoroshkin, A.},
author={Willwacher, T.},
author={\v Zivkovi\'c, M.},
title={Differentials on graph-complexes II - Hairy Graphs},
journal={Lett. Math. Phys.}
volume={107},
number={10},
year={2017},
pages={1781--1797},
}

\bib{Maeda2}{article}{
author={Maeda, Toru},
title={On a composition of knot groups.~II},
journal={Math. Sem. Notes Kobe Univ.},
year={1977},
volume={5},
number={3},
pages={457--464},
}

%
%

\bib{Mazur}{article}{
author={Mazur, Barry},
title={On embeddings of spheres},
journal={Bull. Amer. Math. Soc.},
volume={65},
year={1959},
pages={59--65},
}

\bib{MienneThesis}{book}{
author={Mienn\'e, Micha\"el},
title={Tours de Postnikov et invariants de Postnikov pour les op\'erades simpliciales},
year={2018},
note = {Th\`ese de doctorat (Universit\'e de Lille - Sciences et Technologies)},
url = {http://www.theses.fr/2018LIL1I077},
}

\bib{MienneMemoir}{article}{
author={Mienn\'e, Micha\"el},
title={Postnikov decompositions of operads and of bimodules over operads. With an appendix by Benoit Fresse},
note={in preparation},
year={2020},
}


\bib{Sakai}{article}{
author={Sakai, Keiichi},
title={Deloopings of the spaces of long embeddings},
journal={Fund. Math.},
volume={227},
year={2014},
number={1},
pages={27--34},
}

\bib{SakaiWatanabe}{article}{
author={Sakai, Keiichi},
author={Watanabe,  Tadayuki},
title={1-loop graphs and conﬁguration space integral for embedding spaces},
journal={Math. Proc. Cambridge Philos. Soc.},
volume={152},
year={2012},
number={3},
pages={497--533},
}

\bib{Smale}{article}{
author={Smale, S.},
title={Classification of immersions of spheres in Euclidean space},
journal={Ann. of Math.},
year={1959},
volume={69},
pages={327-344},
}

%

\bib{Sos}{article}{
author={Sosinski\v{\i}, A. B.},
title={Decomposition of knots (Russian)},
journal={Mat. Sb. (N.S.)},
volume={81 (123)},
year={1970},
pages={139--150},
note={[English translation in Math. USSR-Sb. 10 (1970), 139--150]},
}

\bib{Maeda1}{article}{
author={Takeshi, Yajima},
author={Maeda, Toru},
title={On a composition of knot groups},
journal={Kwansei Gakuin Univ. Annual Stud.},
volume={25},
year={1976},
pages={105-109},
}

%
%
%
%
%

\bib{TW}{article}{
author= {Turchin, Victor},
author={Willwacher, Thomas},
title={Relative (non-)formality of the little cubes operads and the algebraic Cerf Lemma},
journal={Amer. J. Math.},
volume={140},
year={2018},
number={2},
pages={277--316},
}
%

%

\bib{Weiss}{article}{
    AUTHOR = {Weiss, Michael},
     TITLE = {Embeddings from the point of view of immersion theory. {I}},
   JOURNAL = {Geom. Topol.},
    VOLUME = {3},
      YEAR = {1999},
     PAGES = {67--101 (electronic)},
      ISSN = {1465-3060},
}
%
%
%
%
%
%
%
%

\end{biblist}
\end{bibdiv}

\end{document}